\newtheorem{theorem}{Theorem}[section]
\newtheorem{proposition}[theorem]{Proposition}
\newtheorem{corollary}[theorem]{Corollary}
\newtheorem{lemma}[theorem]{Lemma}
\theoremstyle{definition}
\newtheorem{definition}[theorem]{Definition}
\newtheorem{example}[theorem]{Example}
\newtheorem{remark}[theorem]{Remark}
\numberwithin{equation}{section}
\def\Ker{\operatorname{Ker}}
\def\Hom{\operatorname{Hom}}
\def\lim{\operatorname{lim}}
\DeclareMathOperator{\cl}{cl}
\DeclareMathOperator{\Brnr}{Br_{nr}}
\DeclareMathOperator{\Gab}{G_{ab}}
\begin{document}

\title[ Non-abelian tensor product and the Bogomolov multiplier]{On some closure properties of the non-abelian tensor product and the Bogomolov multiplier}
\author[G. Donadze, M. Ladra, V. Thomas]{G. Donadze, M. Ladra, V. Thomas}

\address{\small Guram Donadze: \;\rm Indian Institute of Science Education and Research\\}
\email{gdonad@gmail.com}

\address{\small Manuel Ladra: \;\rm University of Santiago de Compostela\\}
\email{manuel.ladra@usc.es}

\address{\small Viji Z. Thomas: \;\rm Indian Institute of Science Education and Research\\}
\email{vthomas@iisertvm.ac.in}

\begin{abstract}
We prove that the class of nilpotent by finite, solvable by finite, polycyclic by finite, nilpotent
of nilpotency class $n$ and supersolvable groups are closed under the formation of the non-abelian tensor product.
We provide necessary and sufficient conditions for the non-abelian tensor product of finitely generated groups to be
finitely generated. We prove that central extensions of most finite simple groups have trivial Bogomolov multiplier.
\end{abstract}

\subjclass[2010]{20D99, 20F16, 20F05, 20F80, 20G05, 20G06}
\keywords{Bogomolov multiplier, Schur multiplier, non-abelian tensor product}

\maketitle

\section{Introduction}

One of the objectives of this paper is to study some closure and finiteness properties of the non-abelian tensor product $G\otimes H$ of groups.
 R. Brown and J.-L. Loday introduced the non-abelian tensor product $G\otimes H$ for a pair of groups $G$ and $H$ in \cite{BroLod84} and \cite{BroLod87}
  in the context of an application in homotopy theory, extending the ideas of J.H.C. Whitehead in \cite{Whi52}.
   We were naturally led to the study of the closure properties of non-abelian tensor product of groups while considering the question
    whether the Schur multiplier of Noetherian groups is finitely generated. Our other objective is to study the Bogomolov multiplier.
     The authors of \cite{KaKu} study groups for which the Bogomolov multiplier is trivial. We prove the triviality of the Bogomolov multiplier
      for some class of groups. The Bogomolov multiplier can be seen as an obstruction to Noether's rationality problem.
       In the last few years, there has been a lot of research on the class of groups with trivial and non-trivial Bogomolov multiplier
       (see \cite{BMP,Ku10,Mo12,Ka14,KaKu}). Except for the Chevalley and Steinberg groups,
       the Schur multiplier of most of the other finite simple groups have order at most 2. There are a very few exceptions to this.
        Keeping this in mind, we show that central extensions of most of the finite simple groups have trivial Bogomolov multiplier.

In \cite{Ell87f} and \cite{Tho10}, the authors prove that the non-abelian tensor product of finite groups is a finite group,
 and they also show that the non-abelian tensor product of finite $p$-groups is a finite $p$-group. In \cite{Vi99},
  Visscher proved that if $G$, $H$ are solvable (nilpotent), then $G\otimes H$ is solvable (nilpotent).
   In \cite{Na00}, Nakaoka also proved that if $G$ and $H$ are solvable, then $G\otimes H$ is solvable.
    She obtains the derived and lower central series of the non-abelian tensor product of groups.
     The author of \cite{Vi99} gives a bound on the nilpotency class of $G\otimes H$ in terms of the derivative subgroup $D_H(G)\trianglelefteq G$.
      We prove that the non-abelian tensor product of groups of nilpotency class at most $n$ is a group of nilpotency class at most $n$,
       thereby improving the bound given by Visscher in \cite{Vi99}. As a corollary, we obtain a bound on the nilpotency class of $G\otimes G$
       which is an improvement of the bound obtained by the authors of \cite{BKM}.
       In \cite{Mo07}, Moravec proved that if $G$ and $H$ are polycyclic groups, then $G\otimes H$ is a polycyclic group.
        So the study of such closure properties has been a recurring theme in the study of non-abelian tensor product of groups.

       We will briefly describe the organization of the paper.
       In Section~\ref{S:prel}, we list some known results to make the exposition self contained and also because we use those results extensively throughout the paper.
         In Section~\ref{S:clos}, we give short proofs of the main results in \cite{Vi99}. We also prove that if $G$ and $H$ are supersolvable groups,
          then $G\otimes H$ is a supersolvable group. Recently the authors of \cite{BaRo} prove that the non-abelian tensor square
           of nilpotent by finite group is a nilpotent by finite group. We prove that the non-abelian tensor product
           of nilpotent by finite groups is a nilpotent by finite group. We also prove that the non-abelian tensor product
            of solvable by finite groups is solvable by finite. Furthermore, we prove that the non-abelian tensor product
             of locally finite, locally solvable, locally nilpotent, locally polycyclic and locally supersolvable has
             the same property respectively. We prove all of the above results using a general strategy and thereby
              bringing all of the above closure properties under one umbrella.

 In Section~\ref{S:more}, we prove the finiteness of $G\otimes H$ under a more general set up. For this, we consider the class of groups $G$
  which is an extension of a finitely generated non-abelian free group by a finite group or it is an extension of a finite group
   by a finitely generated non-abelian free group. With this setup, we prove that if $H$ is a finite group, then $G\otimes H$
   is a finite group. As a consequence, we prove that if $G$ is a finitely generated group and $H$ is a finite group which act
   on each other compatibly, with the action of $H$ on $G$ being trivial, then $G\otimes H$ is a finite group.

In Section~\ref{S:tensor}, we address the following question: is the non-abelian tensor product of finitely generated groups finitely generated?
In general this need not be the case. We provide necessary and sufficient conditions for $G\otimes H$ to be finitely generated.
 If $G$ is a Noetherian group, then we give necessary and sufficient conditions for $G\otimes G$ to be a Noetherian group.
  We show that if $G$ and $H$ are polycyclic by finite groups, then $G\otimes H$ is a polycyclic by finite group, and hence a Noetherian group.

In Section~\ref{S:bogo}, we study the behaviour of Bogomolov multiplier under extensions and as a consequence, we show that the Bogomolov multiplier
 of simple by cyclic groups is trivial. We also prove that the central extensions of groups with Schur multiplier of order at most 2
 have trivial Bogomolov multiplier. As a consequence of our results, we also obtain that the Bogomolov multiplier of the non-abelian
  tensor product of finite groups, $B_0(G\otimes H)$ is trivial, provided $G$ is metacyclic, symmetric group $S_n$ ($n\geq 8$),
   or simple groups with Schur multiplier of order at most 2.

\section{Preliminaries} \label{S:prel}

The non-abelian tensor product of groups is defined for a pair of groups that act on each other provided the actions satisfy the
 compatibility conditions of Definition~\ref{D:2.1} below. Note that we write conjugation on the left, so $^gg'=gg'g^{-1}$ for $g,g'\in G$
  and $^gg'\cdot g'^{-1}=[g,g']$ for the commutator of $g$ and $g'$.

\begin{definition}\label{D:2.1}
Let $G$ and $H$ be groups that act on themselves by conjugation and each of which acts on the other. The mutual actions are said to be \emph{compatible} if
\[
  ^{^h g}h'=\; ^{hgh^{-1}}h' \quad \text{ and }  \quad ^{^g h}g'=\ ^{ghg^{-1}}g', \ \text{for all} \ g,g'\in G, h,h'\in H.
\]
\end{definition}

\begin{definition}
If $G$ and $H$ are groups that act compatibly on each other, then the \emph{non-abelian tensor product} $G\otimes H$
is the group generated by the symbols $g\otimes h$ for $g\in G$ and $h\in H$ with relations
\begin{align*}
gg'\otimes h & =(^gg'\otimes \,^gh)(g\otimes h), \\
g\otimes hh' & =(g\otimes h)(^hg\otimes \,^hh'),
\end{align*}
for all $g,g'\in G$ and $h,h'\in H$.
\end{definition}

The special case where $G=H$, and all actions are given by conjugation, is called the \emph{tensor square} $G\otimes G$. The tensor square of a group is always defined.

There exists a homomorphism $\kappa \colon G\otimes G \rightarrow [G,G]$ sending $g\otimes h$ to $[g,h]$.
 Set $J(G)=\Ker(\kappa)$. Its topological interest is the formula $J(G)\cong \pi_{3}(SK(G,1))$, where $SK(G,1)$ is the suspension of $K(G,1)$.
  The group $J(G)$ lies in the centre of $G\otimes G$.

\begin{definition}
A subgroup of $G$ called the  \emph{derivative} of $G$ by $H$ was introduced in \cite{Vi99}. It is defined as $D_H(G)=\left\langle g \ ^hg^{-1}\mid g\in G,h\in H\right\rangle$.
\end{definition}

The following well-known concept of a crossed module can be found in \cite{BroLod87}. In \cite{Weibel},
it appears in relation with the third cohomology group.

\begin{definition} Let $A$ and $B$ be groups. A \emph{crossed module} is a group
homomorphism $\phi \colon A\rightarrow B$ together with an action of $B$ on $A$
satisfying
\[\phi(^ba)=b\phi(a)b^{-1} \qquad \text{and} \qquad  ^{\phi(a)}a'=aa'a^{-1}\,,\] for
all $b\in B$
and $a,a'\in A$.
\end{definition}

The following proposition appears  in \cite[Proposition 2.3]{BroLod87}. We record it here for the ease of access for the reader.

\begin{proposition} Let $\phi \colon G\otimes H \rightarrow D_H(G)$ be
defined by $\phi(g\otimes h)=g \; ^h g^{-1}$. Then the following hold:
\begin{itemize}
  \item[(i)] $\phi$ is a homomorphism;
 \item[(ii)] there is an action of $G$ on $G\otimes H$ defined by
$^x(g\otimes h)=\;^x g\otimes\,^x h$, where $x\in G$;
 \item[(iii)] $\phi \colon G\otimes H \rightarrow D_H(G)$ is a crossed module.
\end{itemize}
\end{proposition}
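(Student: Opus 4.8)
The plan is to handle all three parts by exploiting the presentation of $G\otimes H$ by generators and relations: a set map on the symbols $g\otimes h$ extends to a homomorphism out of $G\otimes H$ exactly when it respects the two defining relations, and an identity of words in $G\otimes H$ can be reduced to the case of generators by a ``set of solutions is a subgroup'' argument. I note first that the subgroup $\operatorname{Im}\phi$ of $G$ is by construction generated by the elements $g\,{}^{h}g^{-1}$, which is precisely $D_H(G)$; so once $\phi$ is shown to be a homomorphism it is automatically a well-defined homomorphism onto the stated codomain, and it is surjective.

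For (i), I would check directly that $g\otimes h\mapsto g\,{}^{h}g^{-1}$ respects each defining relation. Applying this map to the right-hand side of $gg'\otimes h=({}^{g}g'\otimes{}^{g}h)(g\otimes h)$ gives $\bigl({}^{g}g'\cdot{}^{{}^{g}h}({}^{g}g')^{-1}\bigr)\bigl(g\,{}^{h}g^{-1}\bigr)$; the compatibility condition yields ${}^{{}^{g}h}({}^{g}g'^{-1})={}^{g}\bigl({}^{h}g'^{-1}\bigr)$, and after cancellation the word simplifies to $gg'\,{}^{h}\bigl((gg')^{-1}\bigr)=\phi(gg'\otimes h)$. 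For the second relation $g\otimes hh'=(g\otimes h)({}^{h}g\otimes{}^{h}h')$ the analogous computation works, using only the left-action axioms, and the word collapses to $g\,{}^{hh'}g^{-1}$. Hence $\phi$ is a homomorphism.

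For (ii), fix $x\in G$ and let $\theta_x$ be the map on generators sending $g\otimes h$ to ${}^{x}g\otimes{}^{x}h$. Checking that $\theta_x$ respects the first relation reduces to the equalities ${}^{({}^{x}g)}({}^{x}g')={}^{x}({}^{g}g')$ and ${}^{({}^{x}g)}({}^{x}h)={}^{x}({}^{g}h)$, which follow from the left-action axioms alone; for the second relation one needs in addition the compatibility identity ${}^{({}^{x}h)}({}^{x}g)={}^{x}({}^{h}g)$. So each $\theta_x$ is an endomorphism of $G\otimes H$. On generators one has $\theta_x\circ\theta_y=\theta_{xy}$ and $\theta_1=\mathrm{id}$, hence these hold on all of $G\otimes H$; therefore each $\theta_x$ is an automorphism and $x\mapsto\theta_x$ is a $G$-action, which is exactly the formula claimed.

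For (iii) there are two crossed-module axioms. The equivariance $\phi({}^{x}t)=x\,\phi(t)\,x^{-1}$ need only be verified for $t=g\otimes h$: here $\phi({}^{x}g\otimes{}^{x}h)={}^{x}g\cdot{}^{({}^{x}h)}({}^{x}g)^{-1}$, and the compatibility condition gives ${}^{({}^{x}h)}({}^{x}g^{-1})={}^{x}\bigl({}^{h}g^{-1}\bigr)$, so this equals ${}^{x}\bigl(g\,{}^{h}g^{-1}\bigr)=x\,\phi(g\otimes h)\,x^{-1}$; in particular $D_H(G)\trianglelefteq G$, so $D_H(G)$ acts on $G\otimes H$ by restricting $\theta$. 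For the Peiffer identity ${}^{\phi(t)}s=t\,s\,t^{-1}$ I would observe that $\{t:{}^{\phi(t)}s=tst^{-1}\ \text{for all}\ s\}$ is a subgroup (because $\phi$ is a homomorphism and each $\theta_x$ an automorphism) and that, for fixed $t$, $\{s:{}^{\phi(t)}s=tst^{-1}\}$ is a subgroup; hence it suffices to prove
\[
(g\otimes h)(g'\otimes h')(g\otimes h)^{-1}={}^{g\,{}^{h}g^{-1}}g'\otimes{}^{g\,{}^{h}g^{-1}}h'
\]
for all generators. This last identity follows by a direct manipulation of the two defining relations, using the auxiliary facts $g\otimes 1=1\otimes h=1$ and $(g\otimes h)^{-1}={}^{g}g^{-1}\otimes{}^{g}h={}^{h}g\otimes{}^{h}h^{-1}$ (each an immediate consequence of one relation). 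I expect this final word computation to be the main obstacle: it is elementary, but it is the one place where the two defining relations and the compatibility condition genuinely have to be combined, and it requires careful bookkeeping.
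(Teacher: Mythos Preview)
The paper does not supply its own proof of this proposition; it merely records it as \cite[Proposition~2.3]{BroLod87}. So there is nothing to compare against beyond the original literature, and your direct verification via the generators--and--relations presentation is exactly the standard route taken there and in \cite{BJR87}.

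Your arguments for (i), (ii), and the equivariance axiom in (iii) are correct as written. For the Peiffer identity you correctly reduce to the generator case
\[
(g\otimes h)(g'\otimes h')(g\otimes h)^{-1}={}^{g\,{}^{h}g^{-1}}g'\otimes{}^{g\,{}^{h}g^{-1}}h',
\]
but then stop short of the computation. This is not a genuine obstacle: it is precisely \cite[Proposition~3]{BJR87}, which the present paper itself quotes later in the form ${}^{(a\otimes b)}(a_1\otimes b_1)={}^{[a,b]}a_1\otimes{}^{[a,b]}b_1$. The apparent discrepancy between $[g,h]$ and $g\,{}^{h}g^{-1}$ is harmless: a short compatibility calculation shows that the element $g\,{}^{h}g^{-1}\in G$ and the element ${}^{g}h\,h^{-1}\in H$ induce the \emph{same} automorphism of $G\otimes H$ via the respective $G$- and $H$-actions, so either formulation gives the Peiffer axiom. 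If you want to carry out the word computation yourself rather than cite it, the cleanest path is to first establish the two intermediate identities $(g\otimes h)({}^{h}g'\otimes{}^{h}h')=g\otimes h\cdot{}^{h}(g'\otimes h')$ and $({}^{g}g'\otimes{}^{g}h')(g\otimes h)={}^{g}(g'\otimes h')\cdot(g\otimes h)$, each a one-line consequence of one defining relation, and then combine them with your inverse formulas; the bookkeeping is then short.
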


The following lemma is well known.

\begin{lemma}
 The kernel of a crossed module $\phi \colon A\rightarrow B$ is a central subgroup and the image of $\phi$ is a normal subgroup of $B$.
\end{lemma}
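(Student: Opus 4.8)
The plan is to read both assertions directly off the two defining identities of a crossed module; the proof is two short computations and I do not expect a genuine obstacle, only some bookkeeping about which group is meant.

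First I would treat the image. Being the image of a group homomorphism, $\Im\phi$ is automatically a subgroup of $B$, so only normality needs an argument. Take $b\in B$ and $a\in A$. The first crossed module axiom gives $b\,\phi(a)\,b^{-1}=\phi({}^{b}a)$, which again lies in $\Im\phi$. Since every element of $\Im\phi$ is of the form $\phi(a)$ and conjugation by an arbitrary $b\in B$ keeps it in $\Im\phi$, the subgroup $\Im\phi$ is normal in $B$.

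Next I would show $\Ker\phi$ is central in $A$. Again $\Ker\phi$ is a subgroup as the kernel of a homomorphism, so the point is centrality. Let $k\in\Ker\phi$, so $\phi(k)=1$. Applying the second crossed module axiom with the kernel element placed in the exponent, i.e. taking $a=k$, we get for every $a'\in A$ that $a'={}^{1}a'={}^{\phi(k)}a'=k\,a'\,k^{-1}$, so $k$ commutes with every element of $A$. Hence $\Ker\phi$ is contained in the centre of $A$, and is therefore a central subgroup.

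The only points requiring a little care are purely notational: the word \emph{central} here refers to the centre of the source $A$ (not of $B$), and in the centrality argument the second axiom must be applied with the kernel element occupying the $\phi(-)$ position rather than as the acted-upon element. Beyond that there is nothing to overcome.
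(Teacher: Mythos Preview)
Your proof is correct: both assertions follow immediately from the two crossed module axioms exactly as you describe, and your care about where the kernel element goes in the second identity is the right point to flag. The paper itself does not supply a proof of this lemma---it is simply recorded as well known---so there is no alternative argument to compare against.
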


\section{Closure properties of the non-abelian tensor product of groups}\label{S:clos}

If $G$ and $H$ belong to class $\mathfrak{X}$, then does $G\otimes H$ belong to class $\mathfrak{X}$?
This question has been considered by many authors. The authors of \cite{Ell87f}, \cite{Vi99}, \cite{Na00} and \cite{Mo07}
 have considered this question when $\mathfrak{X}$ is the class of finite groups, $p$-groups, solvable groups,
  nilpotent groups and polycyclic groups. The class of supersolvable groups falls between the class of solvable groups
   and the class of finitely generated nilpotent groups. So it is natural to ask if the non-abelian tensor product of supersolvable
    groups is supersolvable. One of our aims in this section is to prove this result. We also prove that the class of nilpotent by finite,
     solvable by finite, locally finite, locally nilpotent, locally solvable, locally polycyclic and locally supersolvable groups
     are closed under the formation of the non-abelian tensor product of groups. Now we will describe the strategy of the proof
      of our main theorem of this section. We consider the central extension $1\to A\to G\otimes H\xrightarrow{\;\phi\;} D_H(G)\to 1$.
       Since $\phi$ is a crossed module, $A$ is a central subgroup. If $D_H(G)$ is solvable or a nilpotent group, then $G\otimes H$
        being a central extension of $D_H(G)$ inherits that property as well.  Thus we obtain the main result of \cite{Vi99}
         as an immediate corollary of our strategy described above.

\begin{corollary}\label{strategy}
Let $G$ and $H$ be groups acting on each other and acting on themselves by conjugation. If the mutual actions are  compatible, then the following hold:

\item[(i)] If $D_H(G)$ is abelian, then $G\otimes H$ is metabelian.
\item[(ii)] If $D_H(G)$ is solvable, then $G\otimes H$ is solvable.
\item[(iii)] If $D_H(G)$ is nilpotent, then $G\otimes H$ is nilpotent.
\end{corollary}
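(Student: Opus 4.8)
The plan is to exploit the central extension
\[
1 \to A \to G\otimes H \xrightarrow{\;\phi\;} D_H(G) \to 1
\]
supplied by the crossed module structure, where $A = \Ker(\phi)$ is central in $G\otimes H$ by the last lemma. The whole corollary then reduces to the following elementary observation from group theory: if $N$ is a central subgroup of a group $E$ and $E/N$ is abelian (resp. solvable, resp. nilpotent), then $E$ is metabelian (resp. solvable, resp. nilpotent). I would state this as the organizing principle and then verify each of the three items in turn.

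For (i), if $D_H(G)$ is abelian then $[G\otimes H, G\otimes H]$ maps into $[D_H(G),D_H(G)] = 1$ under $\phi$, so $[G\otimes H, G\otimes H] \seq A$; since $A$ is central, $[G\otimes H, G\otimes H]$ is abelian, i.e. $G\otimes H$ is metabelian. For (ii), if $D_H(G)$ has derived length $d$, then the $d$-th derived subgroup $(G\otimes H)^{(d)}$ lies in $A$ and is therefore abelian, so $G\otimes H$ has derived length at most $d+1$. For (iii), if $D_H(G)$ is nilpotent of class $c$, then the $(c+1)$-st term of the lower central series $\gamma_{c+1}(G\otimes H)$ lies in $A$; but then $\gamma_{c+2}(G\otimes H) = [\gamma_{c+1}(G\otimes H), G\otimes H] \seq [A, G\otimes H] = 1$ because $A$ is central, so $G\otimes H$ is nilpotent of class at most $c+1$.

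The only genuine point to be careful about is that the sequence above really is a central extension and that it exists under exactly the stated hypotheses; this is guaranteed by the preceding proposition (item (iii): $\phi$ is a crossed module) together with the lemma that the kernel of a crossed module is central, and both of these require only that $G$ and $H$ act compatibly on each other, which is the sole assumption in the corollary. There is no serious obstacle here — the content is entirely contained in the cited results on crossed modules, and the corollary is essentially a formal consequence, which is precisely why the authors present it as "an immediate corollary" of their general strategy. I would keep the write-up to a few lines, perhaps recording the explicit bounds (derived length $\leq d+1$, nilpotency class $\leq c+1$) since these are the quantitative statements that feed into the later, harder closure theorems.
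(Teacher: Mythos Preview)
Your proposal is correct and follows exactly the paper's own argument: the paper does not give a separate proof of this corollary but derives it in the preceding paragraph from the central extension $1\to A\to G\otimes H\xrightarrow{\phi} D_H(G)\to 1$ with $A$ central (kernel of a crossed module), observing that a central extension of an abelian/solvable/nilpotent group is metabelian/solvable/nilpotent. Your explicit bounds (derived length $\leq d+1$, nilpotency class $\leq c+1$) match what the paper records immediately after the corollary.
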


The authors of \cite{Vi99} and \cite{Na00} show that if $G$ and $H$ are nilpotent groups of class $n$, then $\cl(G\otimes H)\leq \cl(D_H(G))+1$,
 which can also be seen from our strategy described above. It may happen that the nilpotency class of $D_H(G)$ is $n$,
  in which case the above formula gives an upper bound of  $n+1$.  In the next proposition, we improve this bound and
  it provides another example of the closure property of the non-abelian tensor product of groups.

\begin{proposition}\label{nilpotent}
Let $G$ and $H$ be nilpotent groups of nilpotency class $n$ acting on each other. If the mutual actions are compatible, then $G\otimes H$ is a nilpotent group of class at most $n$.
\end{proposition}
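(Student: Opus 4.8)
The plan is to use the crossed module $\phi\colon G\otimes H\to D_H(G)$, whose kernel $A$ is central, and to improve on the bound $\cl(D_H(G))+1\le n+1$ afforded by Corollary~\ref{strategy}. From the crossed module axiom ${}^{\phi(a)}a'=aa'a^{-1}$, conjugation inside $G\otimes H$ is governed by $\phi$: for all $w,v\in G\otimes H$ one has $wvw^{-1}={}^{\phi(w)}v$, where an element $x\in D_H(G)\le G$ acts on $G\otimes H$ by ${}^{x}(g\otimes h)={}^{x}g\otimes{}^{x}h$, using the conjugation action of $G$ on itself together with the action of $G$ on $H$. Since $\phi$ maps onto $D_H(G)$, we have $\phi(\gamma_{n}(G\otimes H))=\gamma_{n}(D_H(G))$. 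Hence it is enough to prove that $\gamma_{n}(D_H(G))$ acts trivially on $G\otimes H$: granting this, for $w\in\gamma_{n}(G\otimes H)$ and any $v\in G\otimes H$ we get $[w,v]={}^{\phi(w)}v\cdot v^{-1}=1$, so $\gamma_{n+1}(G\otimes H)=[\gamma_{n}(G\otimes H),G\otimes H]=1$ and $\cl(G\otimes H)\le n$.

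The crucial point, and the only place where compatibility is genuinely used, is that $D_H(G)$ acts on $H$ by inner automorphisms of $H$. Write $\rho\colon G\to\operatorname{Aut}(H)$ for the action of $G$ on $H$ and $c_{y}$ for conjugation by $y$ in $H$. The compatibility identity ${}^{({}^{h}g)}h'={}^{h}({}^{g}({}^{h^{-1}}h'))$ says exactly that $\rho({}^{h}g)=c_{h}\,\rho(g)\,c_{h}^{-1}$; combining this with $\rho(g\,{}^{h}g^{-1})=\rho(g)\,\rho({}^{h}g)^{-1}$ and the evident identity $\rho(g)\,c_{h}\,\rho(g)^{-1}=c_{{}^{g}h}$ yields $\rho(g\,{}^{h}g^{-1})=c_{({}^{g}h)h^{-1}}\in\operatorname{Inn}(H)$. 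As these elements generate $D_H(G)$, the restriction $\rho|_{D_H(G)}$ factors through $\operatorname{Inn}(H)$. Since moreover $D_H(G)$ is a subgroup of $G$, its action on $G\otimes H$ factors through the image of the homomorphism $D_H(G)\to\operatorname{Inn}(G)\times\operatorname{Inn}(H)$, $d\mapsto(c^{G}_{d},\rho(d))$, where $c^{G}_{d}$ is conjugation by $d$ in $G$: indeed ${}^{d}(g\otimes h)=c^{G}_{d}(g)\otimes\rho(d)(h)$ depends only on $(c^{G}_{d},\rho(d))$.

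Finally, because $G$ and $H$ have nilpotency class at most $n$, the groups $\operatorname{Inn}(G)\cong G/Z(G)$ and $\operatorname{Inn}(H)\cong H/Z(H)$ have class at most $n-1$, hence so does $\operatorname{Inn}(G)\times\operatorname{Inn}(H)$ and every subgroup of it; in particular the image $I$ of $D_H(G)$ there has class at most $n-1$, so $\gamma_{n}(D_H(G))$ maps to the trivial subgroup of $I$. Since the action of $D_H(G)$ on $G\otimes H$ factors through $I$, it follows that $\gamma_{n}(D_H(G))$ acts trivially on $G\otimes H$, and the reduction of the first paragraph gives $\gamma_{n+1}(G\otimes H)=1$, i.e.\ $\cl(G\otimes H)\le n$. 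The one step requiring care is the compatibility computation identifying $\rho|_{D_H(G)}$ with inner automorphisms of $H$; the remainder is bookkeeping with the lower central series through the crossed module.
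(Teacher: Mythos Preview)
Your proof is correct and follows essentially the same approach as the paper: both reduce, via the crossed-module identity ${}^{\phi(w)}v=wvw^{-1}$ (equivalently \cite[Proposition~3]{BJR87}) together with compatibility, to the fact that an $n$-fold commutator in $D_H(G)$ acts on $G$ through $\gamma_n(G)\subseteq Z(G)$ and on $H$ through inner automorphisms, hence through $\gamma_n(\operatorname{Inn}(H))=1$. The paper performs this as an explicit computation on the generators $[\dots[g_1\otimes h_1,g_2\otimes h_2],\dots,g_n\otimes h_n]$, while you package the same step as a factorization of the $D_H(G)$-action on $G\otimes H$ through $\operatorname{Inn}(G)\times\operatorname{Inn}(H)$, a group of class at most $n-1$; the underlying content is identical.
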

\begin{proof} We will show that $(n+1)$-th term of the lower central series $\gamma_{n+1}(G\otimes H)$ is trivial. For this
we show that $xyx^{-1}=y$ for each $x\in \gamma_n(G\otimes H)$ and $y\in G\otimes H$. It suffices to show that conjugating
$g\otimes h$ by $[\dots [[g_1\otimes h_1, g_2\otimes h_2], g_3\otimes h_3], \dots, g_n\otimes h_n]$ fixes $g\otimes h$ for
each $g, g_1, \dots, g_n\in G$ and $h, h_1, \dots, h_n\in H$. By \cite[Proposition 3]{BJR87},
\[
^{(a\otimes b)}(a_1\otimes b_1):=(a\otimes b)(a_1\otimes b_1)(a\otimes b)^{-1}=\;^{[a,b]}(a_1\otimes b_1)=\;^{[a,b]}a_1\otimes\, ^{[a,b]}b_1.
\]
This shows that the action of conjugation by an element $a\otimes b$ is the same as action by $[a,b]$.
Using this and the compatibility of the actions, we obtain
\begin{align*}
&^{[\dots [[g_1\otimes h_1, g_2\otimes h_2], g_3\otimes h_3], \dots, g_n\otimes h_n]}(g\otimes h)
=\;^{[\dots [[[g_1, h_1], [g_2, h_2]], [g_3, h_3]], \dots, [g_n, h_n]]}(g\otimes h)\\
& \ =\;^{[\dots [[[g_1, h_1], [g_2, h_2]], [g_3, h_3]], \dots, [g_n, h_n]]}g \otimes \, ^{[\dots [[[g_1, h_1], [g_2, h_2]], [g_3, h_3]], \dots, [g_n, h_n]]}h\\
& \ =\;^{[\dots [[g_1^{h_1}g_1^{-1}, g_2^{h_2}g_2^{-1}], g_3^{h_3}g_3^{-1}], \dots, g_n^{h_n}g_n^{-1}]}g \otimes \,
^{[\dots [[^{g_1}h_1h_1^{-1}, ^{g_2}h_2h_2^{-1}], ^{g_3}h_3h_3^{-1}], \dots, \:^{g_n}h_nh_n^{-1}]}h \\
& \ = g\otimes h.  \qedhere
\end{align*}
\end{proof}

If $G$ is a nilpotent group of nilpotency class $n$, then by \cite[Proposition 2.2]{BKM} $\cl(G\otimes G)=\cl([G,G])$ or $\cl([G,G])+1$.
 If the nilpotency class of $G$ is $n$, then clearly the nilpotency class of $[G,G]$ is at most $\frac{n}{2}$.
  Thus using the bound found in \cite{BKM}, we obtain that the $\cl(G\otimes G)=\frac{n}{2}+1$.
  The next corollary gives an improvement of this bound and the bounds given in \cite{BJR87}, \cite{Vi99} and \cite{Na00}.
   Since the proof is similar to the proof of the previous proposition, we just record it here without proof.

\begin{corollary}
Let $G$ be a group of nilpotency class $n$. Then the nilpotency class of $G\otimes G$ is bounded above by $\lceil \frac{n}{2} \rceil$, where
 $\lceil x \rceil$ denotes the ceiling function.
\end{corollary}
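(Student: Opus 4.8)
The plan is to follow the strategy of Proposition~\ref{nilpotent}: with $k:=\lceil \frac{n}{2}\rceil$, I would show that $\gamma_k(G\otimes G)$ lies in the centre of $G\otimes G$, so that $\gamma_{k+1}(G\otimes G)=[\gamma_k(G\otimes G),G\otimes G]=1$ and hence $\cl(G\otimes G)\le\lceil\frac{n}{2}\rceil$.

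The first step is to extract the mechanism behind the computation in Proposition~\ref{nilpotent}. By \cite[Proposition 3]{BJR87}, for all $a,b,g,h\in G$,
\[
{}^{(a\otimes b)}(g\otimes h)={}^{[a,b]}(g\otimes h)={}^{[a,b]}g\otimes{}^{[a,b]}h ,
\]
so conjugation by a generator $a\otimes b$ coincides with the action of $\kappa(a\otimes b)=[a,b]$, where $G$, and hence $[G,G]$, acts on $G\otimes G$ by ${}^x(g\otimes h)={}^xg\otimes{}^xh$. Since the conjugation map $G\otimes G\to\operatorname{Aut}(G\otimes G)$ and the composite $G\otimes G\xrightarrow{\kappa}[G,G]\hookrightarrow G\to\operatorname{Aut}(G\otimes G)$ are homomorphisms that agree on the generators $a\otimes b$, they are equal: conjugation by any $u\in G\otimes G$ equals the action of $\kappa(u)\in[G,G]$.

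Now let $u\in\gamma_k(G\otimes G)$. Since $\kappa$ maps $G\otimes G$ onto $[G,G]=\gamma_2(G)$, we have $\kappa(u)\in\gamma_k([G,G])\subseteq\gamma_{2k}(G)$, using only the elementary inclusion $[\gamma_i(G),\gamma_j(G)]\subseteq\gamma_{i+j}(G)$. As $2k=2\lceil\frac{n}{2}\rceil\ge n$, we get $\gamma_{2k}(G)\subseteq\gamma_n(G)$, and $\gamma_n(G)\subseteq Z(G)$ because $\cl(G)=n$ forces $[\gamma_n(G),G]=\gamma_{n+1}(G)=1$. Hence $\kappa(u)$ is central in $G$, so its action on $G\otimes G$ is ${}^{\kappa(u)}(g\otimes h)={}^{\kappa(u)}g\otimes{}^{\kappa(u)}h=g\otimes h$, i.e.\ trivial. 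Therefore conjugation by $u$ is trivial, so $u\in Z(G\otimes G)$, as required.

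The one delicate point — and the reason the bound is $\lceil\frac{n}{2}\rceil$ rather than the $\lceil\frac{n+1}{2}\rceil$ that a crude ``weights add'' estimate gives — is the case of even $n$: there $2k=n$ exactly, so one cannot conclude $\kappa(u)=1$, only $\kappa(u)\in\gamma_n(G)$, and the extra input is precisely that $\gamma_n(G)$ is central in $G$ and that $Z(G)$ acts trivially on $G\otimes G$; for odd $n$ one has $2k=n+1$, so $\kappa(u)=1$ and the conclusion is immediate. Alternatively, one can argue in the concrete style of Proposition~\ref{nilpotent}, conjugating $g\otimes h$ by a left-normed $k$-fold commutator $[\dots[g_1\otimes h_1,g_2\otimes h_2],\dots,g_k\otimes h_k]$ of generators and rewriting this, via the displayed identity, as conjugation by $[\dots[[g_1,h_1],[g_2,h_2]],\dots,[g_k,h_k]]\in\gamma_{2k}(G)\subseteq Z(G)$, which fixes $g\otimes h$; this variant also uses the standard fact that $\gamma_k$ of a group is generated by its left-normed $k$-fold commutators in any generating set.
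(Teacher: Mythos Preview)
Your argument is correct and follows exactly the approach the paper indicates: it records the corollary without proof, noting that ``the proof is similar to the proof of the previous proposition'' (Proposition~\ref{nilpotent}), and your ``alternatively'' paragraph is precisely that computation specialised to the tensor square, with the weight count $[\dots[[g_1,h_1],[g_2,h_2]],\dots,[g_k,h_k]]\in\gamma_{2k}(G)\subseteq Z(G)$ replacing the paper's $\gamma_n(G)$ and $\gamma_n(H)$. Your first, slightly more abstract version---factoring conjugation through $\kappa$ and using $\kappa(\gamma_k(G\otimes G))\subseteq\gamma_k([G,G])\subseteq\gamma_{2k}(G)$---is a clean repackaging of the same idea and makes the even/odd distinction transparent.
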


We do not know whether the analogue of Proposition~\ref{nilpotent} is true for solvable groups. Hence we pose this as a question below.

\

{\bf QUESTION.} Let $G$ and $H$ be solvable groups of solvability length $n$ acting on each other compatibly. Is $G\otimes H$ a solvable group of length at most $n$?

\;

We do not know the answer to the above question even for the case $n=2$, i.e. whether the tensor product of metabelian groups is a metabelian group.
 By Corollary~\ref{strategy}(iii), we obtain the following result, which can also be obtained by the results in \cite{Vi99}.

\begin{corollary}
Let $G$ be a group. If $G$ is metabelian, then $G\otimes G$ is metabelian.
\end{corollary}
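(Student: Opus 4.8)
The plan is to obtain this as an immediate consequence of Corollary~\ref{strategy}(i); the only point requiring verification is that the derivative subgroup $D_G(G)$ is abelian whenever $G$ is metabelian. So the strategy splits into a short computation of $D_G(G)$ followed by an application of the machinery already developed in this section.

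First I would compute $D_G(G)$ explicitly. Here $H=G$ and both groups act on each other by conjugation; such actions are automatically compatible, which is why the tensor square is always defined, so Corollary~\ref{strategy} does apply with $H=G$. For any $g,x\in G$ we have $g\,{}^{x}g^{-1}=gxg^{-1}x^{-1}=[g,x]$, using the convention ${}^{g}g'\cdot g'^{-1}=[g,g']$ fixed in Section~\ref{S:prel}. Consequently $D_G(G)=\langle g\,{}^{x}g^{-1}\mid g,x\in G\rangle=\langle [g,x]\mid g,x\in G\rangle=[G,G]$.

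Now suppose $G$ is metabelian, so that $[G,G]$ is abelian. Then $D_G(G)=[G,G]$ is abelian, and Corollary~\ref{strategy}(i), applied with $H=G$, yields that $G\otimes G$ is metabelian, which is exactly the claim. One could equally invoke Corollary~\ref{strategy}(iii): an abelian $D_G(G)$ is nilpotent of class $1$, so the central extension $1\to A\to G\otimes G\to D_G(G)\to 1$ forces $G\otimes G$ to be nilpotent of class at most $2$, and in particular metabelian.

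There is no real obstacle here: once the identification $D_G(G)=[G,G]$ is in hand, the statement is a one-line corollary of the results already set up. The only thing to handle with a little care is the commutator convention, but since $gxg^{-1}x^{-1}$ is a commutator for every choice of $g$ and $x$, the subgroup it generates as $g,x$ range over $G$ is exactly $[G,G]$ no matter which convention is chosen, so the argument is unaffected.
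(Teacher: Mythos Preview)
Your proof is correct and follows essentially the same route as the paper: identify $D_G(G)=[G,G]$, note it is abelian when $G$ is metabelian, and then invoke Corollary~\ref{strategy}. The only cosmetic difference is that the paper cites part~(iii) (abelian $\Rightarrow$ nilpotent, so the central extension has class at most~$2$, hence is metabelian), while you invoke part~(i) directly; as you yourself observe, these two invocations amount to the same argument.
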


In the next theorem, we prove that the property of being supersolvable is closed under formation of non-abelian tensor product of groups.
\begin{theorem}
Let $G$ and $H$ be groups acting on each other compatibly. If $G$ and $H$ are supersolvable, then $G\otimes H$ is supersolvable.
\end{theorem}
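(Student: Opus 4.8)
The plan is to run the strategy of Corollary~\ref{strategy} one step further, via the central extension
\[
1 \lra A \lra G\otm H \xrightarrow{\;\phi\;} D_H(G) \lra 1,
\]
where $\phi\colon G\otm H\to D_H(G)$ is the crossed module recorded above and $A=\Ker(\phi)$ is, by the lemma on crossed modules, a central subgroup of $G\otm H$. Since supersolvability passes to subgroups, $D_H(G)\le G$ is supersolvable; fix a normal series $1=Q_0\triangleleft Q_1\triangleleft\cdots\triangleleft Q_m=D_H(G)$ with every $Q_i\triangleleft D_H(G)$ and every $Q_{i+1}/Q_i$ cyclic.

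The first and crucial step is to show that $A$ is finitely generated. Supersolvable groups are polycyclic, so $G$ and $H$ are polycyclic, and hence $G\otm H$ is polycyclic by \cite{Mo07}. Since every subgroup of a polycyclic group is finitely generated, $A$ is finitely generated; being central it is a finitely generated abelian group, which therefore has a finite series $1=A_0\triangleleft A_1\triangleleft\cdots\triangleleft A_k=A$ with each $A_{j+1}/A_j$ cyclic.

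The second step is to splice these two series. Because $A\le Z(G\otm H)$, each $A_j$ is automatically normal in $G\otm H$. Put $E_i=\phi^{-1}(Q_i)$, so that $E_0=A$, $E_m=G\otm H$, and $E_i\triangleleft G\otm H$ since $Q_i$ is normal in $\Im(\phi)=D_H(G)$; moreover $\phi$ induces an isomorphism $E_{i+1}/E_i\cong Q_{i+1}/Q_i$, so each such quotient is cyclic. Concatenating,
\[
1=A_0\triangleleft\cdots\triangleleft A_k=A=E_0\triangleleft E_1\triangleleft\cdots\triangleleft E_m=G\otm H
\]
is a series of $G\otm H$ whose every term is normal in $G\otm H$ and whose every successive quotient is cyclic; thus $G\otm H$ is supersolvable.

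The only real obstacle is the finite generation of the central kernel $A$: a central extension of a (super)solvable group by an infinitely generated abelian group need not be polycyclic, let alone supersolvable, so one genuinely needs to know in advance that $G\otm H$ is polycyclic before the splicing can be carried out. Everything past that point is a routine assembly of normal series, the one subtlety being the use of centrality of $A$ to guarantee that the lower part of the series consists of subgroups normal in all of $G\otm H$.
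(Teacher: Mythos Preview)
Your proof is correct and follows essentially the same route as the paper: both use the central extension $1\to A\to G\otm H\to D_H(G)\to 1$ coming from the crossed module, invoke \cite{Mo07} to deduce that $G\otm H$ is polycyclic and hence that the central kernel $A$ is a finitely generated abelian group, and then splice a cyclic series for $A$ onto the pullback of a supersolvable series for $D_H(G)$. The only cosmetic difference is that the paper carries out the splicing as an induction, successively quotienting $G\otm H$ by cyclic direct summands of $A$ and observing that an extension of a cyclic (central) group by a supersolvable group is supersolvable, whereas you write down the full normal series in one stroke.
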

\begin{proof} Consider the following exact sequence:
\[
1\to A \to G\otimes H \xrightarrow{\phi_G} D_H(G) \to 1,
\]
where $\phi_G \colon g\otimes h \mapsto  g\;^{h}g^{-1}$ for each $g\in G$, $h\in H$. Since $G\otimes H \to D_H(G)$ is a crossed module, $A$ is a
subgroup of the center of $G\otimes H$. Noting that every supersolvable group is polycyclic, we conclude that $G\otimes H$ is
polycyclic \cite{Mo07}. Hence $A$ is finitely generated and is isomorphic to the direct product of finitely many cyclic
groups,  $A= \displaystyle\mathop{\oplus}\limits_{i=1}^n A_i$. Since $A_i$ is a central subgroup of $G\otimes H$, it is a normal subgroup of $G\otimes H$ for each $1\leq i\leq n$.
The following is an extension of a cyclic group by a supersolvable group, $1\to A_1 \to (G\otimes H)/\displaystyle\mathop{\oplus}\limits_{i=2}^n A_i \to (G\otimes H)/A = D_H(G) \to 1$.
Therefore $(G\otimes H)/\displaystyle\mathop{\oplus}\limits_{i=2}^n A_i$ is supersolvable. Now consider the extension of
groups, $1\to A_2 \to (G\otimes H)/\displaystyle\mathop{\oplus}\limits_{i=3}^n A_i \to (G\otimes H)/\displaystyle\mathop{\oplus}\limits_{i=2}^n A_i \to 1$.
This is also an extension of a cyclic group by supersolvable group implying supersolvability of $(G\otimes H)/\displaystyle\mathop{\oplus}\limits_{i=3}^n A_i$.
Proceeding by induction, we will obtain that $G\otimes H$ is a supersolvable group.
\end{proof}

Using the same strategy as above, we want to examine whether $G\otimes H$ belongs to the class $P$ if either $G$ or $H$ belongs to the class $P$. We begin with the following lemma.

\begin{lemma}
Let $G$ and $H$ be groups acting on each other compatibly. Suppose $P$ is a property of groups that satisfies the following conditions:
\item[(i)] $P$ is closed under taking normal subgroups;
\item[(ii)] If a group has property $P$, then any central extension of that group has property $P$.

Then $G\otimes H$ has property $P$ as long as one of $G$ or $H$ has property $P$.
\end{lemma}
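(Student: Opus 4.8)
The plan is to reuse verbatim the crossed-module strategy that already produced Corollary~\ref{strategy}. Suppose first that $G$ has property $P$. Recall from the introduction (and from the crossed module $\phi\colon G\otimes H\to D_H(G)$ of the Proposition above) that the derivative $D_H(G)=\left\langle g\,{}^hg^{-1}\mid g\in G,\ h\in H\right\rangle$ is a normal subgroup of $G$: indeed the defining identity $\phi({}^x a)=x\,\phi(a)\,x^{-1}$ of the crossed module, valid for $x\in G$, shows $x D_H(G) x^{-1}\subseteq D_H(G)$. Hence, by hypothesis (i), $D_H(G)$ has property $P$.

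Next I would invoke the exact sequence
\[
1\to A\to G\otimes H\xrightarrow{\ \phi\ } D_H(G)\to 1,
\]
where $A=\Ker\phi$. Since $\phi$ is a crossed module, the last lemma of Section~\ref{S:prel} shows that $A$ is central in $G\otimes H$, so this is genuinely a central extension. As $D_H(G)$ has property $P$, hypothesis (ii) immediately gives that $G\otimes H$ has property $P$.

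It remains to treat the case in which $H$, rather than $G$, has property $P$. For this I would use the natural isomorphism $G\otimes H\cong H\otimes G$ given by $g\otimes h\mapsto (h\otimes g)^{-1}$; the compatibility conditions of Definition~\ref{D:2.1} are symmetric in $G$ and $H$, so $H$ and $G$ act compatibly on each other, and the previous two paragraphs apply with the roles of $G$ and $H$ exchanged. This yields that $H\otimes G$, and hence $G\otimes H$, has property $P$.

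The argument is essentially formal once the inputs from Section~\ref{S:prel} are in place; the only points that need to be stated with care are that $D_H(G)$ is normal in $G$ (so that (i) may be applied to it) and that $\Ker\phi$ is central (so that (ii) may be applied), both of which are recorded above. I therefore expect no substantive obstacle beyond citing these facts correctly and checking that compatibility transfers under $G\otimes H\cong H\otimes G$.
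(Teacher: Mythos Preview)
Your proposal is correct and follows essentially the same approach as the paper: both arguments exploit that $D_H(G)$ is normal in $G$ (so inherits $P$ by (i)) and that $\Ker\phi$ is central (so $G\otimes H$ inherits $P$ by (ii)). The only cosmetic difference is that for the case where $H$ has property $P$ you pass through the isomorphism $G\otimes H\cong H\otimes G$, whereas the paper invokes directly the companion crossed module $\phi_H\colon G\otimes H\to D_G(H)$, $g\otimes h\mapsto {}^g h\,h^{-1}$; these amount to the same thing.
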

\begin{proof}
The lemma follows easily by considering the central extensions $1\to \Ker \phi_G \to G\otimes H \xrightarrow{\phi_G} D_H(G)\to 1$
or $1\to  \Ker \phi_H \to G\otimes H \xrightarrow{\phi_H} D_G(H)\to 1$.
\end{proof}

\begin{corollary}
Let $G$ and $H$ be groups acting on each other and acting on themselves by conjugation. If the mutual actions are  compatible, then the following hold:
\item[(i)] If $G$ or $H$ is solvable by finite, then $G\otimes H$ is solvable by finite;
\item[(iii)] If $G$ or $H$ is nilpotent by finite, then $G\otimes H$ is nilpotent by finite.
\end{corollary}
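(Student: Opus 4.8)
The plan is to obtain the corollary directly from the preceding Lemma, applied twice: once with $P$ the property \emph{solvable by finite}, and once with $P$ the property \emph{nilpotent by finite}. Thus the whole task reduces to checking that each of these two classes of groups satisfies hypotheses (i) and (ii) of that Lemma.

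For hypothesis (i), closure under normal subgroups: if $K$ has a solvable (resp.\ nilpotent) normal subgroup $S$ of finite index and $N\trianglelefteq K$, then $N\cap S$ is solvable (resp.\ nilpotent), being a subgroup of $S$; it is normal in $N$; and $N/(N\cap S)\cong NS/S$ embeds in the finite group $K/S$. Hence $N$ is solvable (resp.\ nilpotent) by finite. In fact the same argument shows both classes are closed under arbitrary subgroups, which is all that is actually needed when the Lemma is invoked, since $D_H(G)\le G$ and $D_G(H)\le H$; I would also note in passing that $D_H(G)\trianglelefteq G$, because $g_1(g_2\,{}^hg_2^{-1})g_1^{-1}=\big(g_1g_2\,{}^h(g_1g_2)^{-1}\big)\big(g_1\,{}^hg_1^{-1}\big)^{-1}$ lies in $D_H(G)$ for all $g_1,g_2\in G$, $h\in H$.

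For hypothesis (ii), closure under central extensions: let $1\to Z\to E\to Q\to 1$ be a central extension with $Q$ solvable (resp.\ nilpotent) by finite. Choose a solvable (resp.\ nilpotent) normal subgroup $S\trianglelefteq Q$ of finite index and let $\widetilde S\le E$ be its full preimage. Then $\widetilde S\trianglelefteq E$ with $E/\widetilde S\cong Q/S$ finite, and $Z\le\widetilde S$ is central in $\widetilde S$ with $\widetilde S/Z\cong S$. Since an extension of a solvable group by a central (hence abelian, hence solvable) subgroup is solvable, and since a group possessing a central subgroup whose quotient is nilpotent of class $c$ is itself nilpotent of class at most $c+1$, the group $\widetilde S$ is solvable (resp.\ nilpotent); therefore $E$ is solvable (resp.\ nilpotent) by finite.

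With both hypotheses in hand, the Lemma applies and yields the result: if $G$ has the property one uses the central extension $1\to\Ker\phi_G\to G\otimes H\xrightarrow{\phi_G}D_H(G)\to 1$, and if $H$ has it one uses $1\to\Ker\phi_H\to G\otimes H\xrightarrow{\phi_H}D_G(H)\to 1$. I do not expect a genuine obstacle here; the only non-formal ingredient is the standard fact that a central extension of a nilpotent group is nilpotent, with class increasing by at most one, and the single point requiring care is that in checking hypothesis (ii) one must first pass to the preimage of the finite-index normal subgroup before invoking that fact, rather than attempting to apply it to $E$ itself.
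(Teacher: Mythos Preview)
Your proposal is correct and follows essentially the same approach as the paper: reduce to verifying the two hypotheses of the preceding Lemma, then check closure under normal subgroups via $N\cap S$ and closure under central extensions by pulling back the finite-index solvable (resp.\ nilpotent) subgroup to $E$. Your write-up is in fact slightly more complete, since you spell out the nilpotent case explicitly and record the normality of $D_H(G)$, whereas the paper treats only the solvable case in detail and leaves the rest to the reader.
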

\begin{proof}
By the previous lemma, it suffices to prove that the property solvable by finite or nilpotent by finite is closed under taking normal subgroups
and also closed under taking central extensions. We will prove the result assuming $G$ is solvable by finite, the other case follows similarly.
 So we have an exact sequence $1\to S\to G\to F\to 1$, where $S$ is a solvable subgroup of $G$ and $F$ is a finite group.
  Suppose $N$ is a normal subgroup of $G$, our aim is to show that $N$ is solvable by finite. Consider the exact sequence,
   $1\to N\cap S\to N\to F_1\to 1$. Clearly $F_1$ is a finite group and $N\cap S$ is a subgroup of $S$ and hence solvable.
    Thus $N$ is a solvable by finite group. Now consider the central extension $1\to C\to E\xrightarrow{\, f \,} G\to 1$.
     Our aim is to show that $E$ is a solvable by finite group. To see this, first consider the central extension
      $1\to C\to f^{-1}(S)\xrightarrow{\, f \,} S\to 1$. Note that $f^{-1}(S)$ is a solvable group as it is a central extension of a solvable group.
       Finally consider the following extension of groups, $1\to f^{-1}(S)\to E\to F\to 1$ to obtain the desired result.
\end{proof}

\begin{remark}
Note that in the proof of the previous result, we only require $D_G(H)$ or $D_H(G)$ to have property $P$.
\end{remark}

We have already seen that if $G$ and $H$ belong to the class of finite, solvable, supersolvable, nilpotent or polycyclic groups,
 then $G\otimes H$ also belongs to the same class, respectively. It is natural to ask if the same is true if we replace the property $P$
  by the property locally $P$. We say that the property $P$ is closed under forming the non-abelian tensor product of groups if $G$ and $H$
  have property $P$ implies that $G\otimes H$ has property $P$. With this terminology, we state the following lemma.

\begin{lemma}
Let $G$ and $H$ be groups acting on each other compatibly. Suppose $P$ is a property of groups that satisfies the following conditions:
\item[(i)] $P$ is closed under taking subgroups and homomorphic images;
\item[(ii)] $P$ is closed under forming the non-abelian tensor product of groups.

Then $G\otimes H$ is locally $P$ provided $G$ and $H$ are locally $P$.
\end{lemma}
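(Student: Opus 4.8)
The plan is to reduce to the finitely generated case and then apply functoriality of the non-abelian tensor product together with hypotheses (i) and (ii).

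First I would unwind the definition: $G\otimes H$ is locally $P$ exactly when every finitely generated subgroup of it has property $P$. So fix a finitely generated subgroup $L\le G\otimes H$. Since $G\otimes H$ is generated by the symbols $g\otimes h$, there are finitely many elements $g_1,\dots,g_n\in G$ and $h_1,\dots,h_n\in H$ with $L\subseteq\langle g_1\otimes h_1,\dots,g_n\otimes h_n\rangle$. Put $G_0=\langle g_1,\dots,g_n\rangle\le G$ and $H_0=\langle h_1,\dots,h_n\rangle\le H$; being finitely generated subgroups of the locally $P$ groups $G$ and $H$, they have property $P$.

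Next I would enlarge $G_0$ and $H_0$ to finitely generated subgroups $G'\le G$ and $H'\le H$ that are invariant under the restricted actions, i.e. ${}^{G'}H'\subseteq H'$ and ${}^{H'}G'\subseteq G'$. For such $G'$, $H'$ the mutual actions restrict, and the restriction of a pair of compatible actions to mutually invariant subgroups is again compatible, so $G'\otimes H'$ is defined; since $G'$ and $H'$ are still finitely generated they retain property $P$, so $G'\otimes H'$ has property $P$ by (ii). The inclusions $G'\hookrightarrow G$, $H'\hookrightarrow H$ are equivariant, hence induce a homomorphism $G'\otimes H'\to G\otimes H$, $g\otimes h\mapsto g\otimes h$, whose image $\langle g\otimes h\mid g\in G',\,h\in H'\rangle$ contains each $g_i\otimes h_i$ and therefore contains $L$. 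As $P$ passes to homomorphic images and to subgroups by (i), $L$ has property $P$; and $L$ was arbitrary, so $G\otimes H$ is locally $P$.

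The step that requires care is the enlargement: one needs $G'$ and $H'$ to be \emph{simultaneously} finitely generated and closed under the mutual actions. The natural device is the iteration $G_{k+1}=\langle G_k,\,{}^{H_k}G_k\rangle$, $H_{k+1}=\langle H_k,\,{}^{G_k}H_k\rangle$, after which one must argue that the process stabilises, or at least that the resulting subgroups still carry property $P$; this is where the compatibility of the actions and the nature of $P$ are used. (When the actions arise from conjugation inside a common overgroup the closure already lies inside $\langle G_0\cup H_0\rangle$, so the process terminates immediately.) I expect this to be the technical heart of the argument; the rest is formal.
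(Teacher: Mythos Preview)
Your outline is exactly the paper's argument: take a finitely generated $X\le G\otimes H$, collect the finitely many $g$'s and $h$'s appearing in its generators into finitely generated subgroups $G_1\le G$, $H_1\le H$, note these have property $P$, form $G_1\otimes H_1$ (property $P$ by (ii)), and observe that $X$ lies in the image of the natural map $G_1\otimes H_1\to G\otimes H$, so $X$ has $P$ by (i).

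The only divergence is that you explicitly worry about whether $G_1\otimes H_1$ is even defined: the given actions of $G$ on $H$ and of $H$ on $G$ need not restrict to actions of $G_1$ on $H_1$ and of $H_1$ on $G_1$ (e.g.\ $G=H=S_3$ by conjugation, $G_1=\langle(12)\rangle$, $H_1=\langle(13)\rangle$, then $^{(13)}(12)=(23)\notin G_1$), so neither the tensor product nor the ``natural homomorphism'' is automatic. The paper does not address this point at all; it simply writes ``$G_1\otimes H_1$ has property $P$'' and invokes ``the natural homomorphism $\phi\colon G_1\otimes H_1\to G\otimes H$'' without comment. So the step you single out as ``the technical heart'' is precisely the step the paper glosses over. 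Your iterative enlargement keeps each $G_k,H_k$ finitely generated but gives no reason the union should be, and your parenthetical about the conjugation case does not close the gap either: containment of the closure in $\langle G_0\cup H_0\rangle$ does not by itself yield finitely generated mutually invariant subgroups of $G$ and of $H$. In short, your proposal and the paper's proof coincide, and the issue you raise is a genuine lacuna shared by both.
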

\begin{proof}
We need to show that any finitely generated subgroup of $G\otimes H$ has property $P$. Let $X$ be a finitely generated subgroup of $G\otimes H$.
 Suppose it is generated by $x_1,\dots, x_t$, where each $x_i=\prod_j g_{i_j}\otimes h_{i_j}$. Let $G_1$ be the subgroup of $G$
  generated by $g_{i_j}$ for all $i$ and all $j$. Let $H_1$ be the subgroup of $H$ generated by $h_{i_j}$ for all $i$ and all $j$.
   By assumption $G_1$ and $H_1$ have property $P$ and hence $G_1\otimes H_1$ has property $P$. Consider the natural homomorphism
    from $\phi \colon G_1\otimes H_1\to G\otimes H$. Clearly $X$ is a subgroup of the image of $\phi$ and hence has property $P$.
\end{proof}

As an immediate corollary, we obtain the following result.

\begin{corollary}
Let $G$ and $H$ be groups acting on each other and acting on themselves by conjugation. If the mutual actions are  compatible, then the following hold:
\item[(i)] If $G$ and $H$ are locally finite, then $G\otimes H$ is locally finite;
\item[(ii)] If $G$ and $H$ are locally solvable, then $G\otimes H$ is locally solvable;
\item[(iii)] If $G$ and $H$ are locally nilpotent, then $G\otimes H$ is locally nilpotent;
\item[(iv)] If $G$ and $H$ are locally polycyclic, then $G\otimes H$ is locally polycyclic;
\item[(v)] If $G$ and $H$ are locally supersolvable, then $G\otimes H$ is locally supersolvable.
\end{corollary}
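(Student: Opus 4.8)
The plan is to deduce all five statements at once from the previous lemma: it remains only to check, for each property $P$ in the list $\{$finite, solvable, nilpotent, polycyclic, supersolvable$\}$, that $P$ satisfies hypothesis (i) — closure under taking subgroups and homomorphic images — and hypothesis (ii) — closure under forming the non-abelian tensor product of groups. Hypothesis (i) is classical in every case: a subgroup or a quotient of a finite (resp.\ solvable, nilpotent, polycyclic, supersolvable) group is again finite (resp.\ solvable, nilpotent, polycyclic, supersolvable).

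For hypothesis (ii) I would simply invoke the closure results already on record. If $G$ and $H$ are finite, then $G\otimes H$ is finite by \cite{Ell87f} and \cite{Tho10}; if $G$ and $H$ are solvable, then $G\otimes H$ is solvable by \cite{Vi99} and \cite{Na00} (equivalently by Corollary~\ref{strategy}(ii), since $D_H(G)$, being a subgroup of $G$, is then solvable); if $G$ and $H$ are nilpotent, then $G\otimes H$ is nilpotent by Proposition~\ref{nilpotent}; if $G$ and $H$ are polycyclic, then $G\otimes H$ is polycyclic by \cite{Mo07}; and if $G$ and $H$ are supersolvable, then $G\otimes H$ is supersolvable by the theorem proved above. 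In each of the five cases both hypotheses of the previous lemma hold, so $G\otimes H$ is locally $P$ whenever $G$ and $H$ are locally $P$, which is precisely assertions (i)--(v).

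There is essentially no obstacle here beyond bookkeeping: the mathematical content has been front-loaded into the earlier closure theorems, and the function of the previous lemma is exactly to package the passage from $P$ to locally $P$ once $P$ is known to behave well under subgroups, quotients, and the tensor product. The only point worth flagging is that the polycyclic and supersolvable cases genuinely rely on \cite{Mo07} and on the supersolvability theorem above, and cannot be obtained from the central-extension strategy of Corollary~\ref{strategy}, since a central extension of a polycyclic (or supersolvable) group need not be polycyclic (or supersolvable) when the central kernel fails to be finitely generated.
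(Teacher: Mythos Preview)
Your proposal is correct and is exactly the approach the paper takes: the paper states this result as an immediate corollary of the preceding lemma with no further proof, and you have simply written out the verification that each of the five properties satisfies hypotheses (i) and (ii) of that lemma, citing the appropriate earlier results. Your remark about why the polycyclic and supersolvable cases cannot be handled via the central-extension strategy is a nice piece of commentary but not needed for the argument itself.
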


\section{More on finiteness of $G\otimes H$}\label{S:more}

The finiteness of $G\otimes H$ when $G$ and $H$ are finite has been the topic of \cite{Ell87f} and \cite{Tho10}.
 In this section we will show that $G\otimes H$ is finite in a more general set up. If $G$ is finitely generated and $H$ is finite,
  then $G\otimes H$ need not be finite as the following example shows.

\begin{example}
Let $H$ be a finite group and $G=\mathbb{Z}(H)$ the underlying abelian group of the integral group ring of $H$.
 Define an action of $H$ on $\mathbb{Z}(H)$ via the multiplication in $\mathbb{Z}(H)$.
Moreover, suppose that $\mathbb{Z}(H)$ acts trivially on $H$.
Then, we have mutual compatible actions of $G$ and $H$, and by \cite{Gu88} there is an isomorphism
$H\otimes \mathbb{Z}(H)=I(H)\otimes_H \mathbb{Z}(H)=I(H)$. But $I(H)$ is not finite for $H\neq \{1\}$.
\end{example}

In this section we will show that if $G$ is finitely generated and $H$ is finite, then $G\otimes H$ is finite provided $G$ is from the class defined below.

\subsection{Definition of a class $\mathcal{C}$}
We say that a group $G$ belongs to a class $\mathcal{C}$ and write $G\in \mathcal{C}$, if either $G$ is an extension of a finite group by a finitely generated
non-abelian free group, or $G$ is an extension of a finitely generated non-abelian free group by a finite group,
i.e. we have one of the following extensions of groups:
\begin{align}
& 1\to Q\to G \to F \to 1, \label{C1} \tag{$\mathcal{C}_1$}\\
& 1\to F\to G \to Q \to 1, \label{C2} \tag{$\mathcal{C}_2$}
\end{align}
where $Q$ is a finite group and $F$ is a finitely generated non-abelian free group.

\begin{lemma} \label{lemma1}
Let $G$ be a group from the class $\mathcal{C}$.
Then $H_n(G)$ is finite for all $n\geq 2$, where $H_n(G)$ denotes the $n$-th Eilenberg-MacLane homology group.
\end{lemma}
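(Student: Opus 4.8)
The plan is to treat the two cases $(\mathcal{C}_1)$ and $(\mathcal{C}_2)$ separately, in each case using the Lyndon--Hochschild--Serre spectral sequence of the relevant extension together with two classical facts: first, that a finitely generated non-abelian free group $F$ has homological dimension $1$, so $H_n(F;M)=0$ for every $n\geq 2$ and every coefficient module $M$, while $H_1(F)$ is free of finite rank; second, that a finite group $Q$ has $H_n(Q;M)$ finite for all $n\geq 1$ whenever $M$ is a finitely generated abelian group (and more generally that these groups are annihilated by $|Q|$). I would also recall that subgroups and quotients appearing here are of the required type, so the homology groups that enter as $E^2$-terms are finitely generated.

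Consider first case $(\mathcal{C}_2)$: $1\to F\to G\to Q\to 1$ with $F$ free of finite rank and $Q$ finite. The spectral sequence has $E^2_{p,q}=H_p(Q;H_q(F))$. Since $F$ has homological dimension $1$, the only nonzero rows are $q=0$ and $q=1$, so $E^2_{p,q}=0$ for $q\geq 2$. For $q=0$ we get $H_p(Q;\mathbb{Z})$, finite for $p\geq 1$; for $q=1$ we get $H_p(Q;H_1(F))=H_p(Q;\mathbb{Z}^r)$, which is finite for $p\geq 1$ and finitely generated for $p=0$. Hence for $n\geq 2$ the groups $E^2_{p,q}$ with $p+q=n$ are all finite except possibly $E^2_{n-1,1}$ with $n-1=0$, which does not occur since $n\geq 2$. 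Thus every term on the diagonal $p+q=n$ is finite for $n\geq 2$, the differentials can only shrink them, and $H_n(G)$ — being built from the $E^\infty$-terms by finitely many extensions — is finite.

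Now case $(\mathcal{C}_1)$: $1\to Q\to G\to F\to 1$ with $Q$ finite and $F$ free of finite rank. Here $E^2_{p,q}=H_p(F;H_q(Q))$, and since $F$ has homological dimension $1$ the only nonzero columns are $p=0$ and $p=1$. For $p\geq 2$ everything vanishes, so for $n\geq 2$ the only possibly-nonzero terms on the diagonal $p+q=n$ are $E^2_{0,n}=H_0(F;H_n(Q))$ and $E^2_{1,n-1}=H_1(F;H_{n-1}(Q))$. Both involve $H_q(Q)$ with $q\geq 1$, which is finite (note $H_q(Q)$ is finitely generated as $Q$ is finite, and in fact finite for $q\geq 1$), and $H_0(F;-)$, $H_1(F;-)$ of a finite module over the finitely generated group $F$ are again finite. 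So the diagonal terms are finite, and as before $H_n(G)$ is finite for $n\geq 2$.

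The main obstacle is simply bookkeeping about which coefficient modules are finitely generated versus finite, and making sure the action of $Q$ on $H_1(F)$ (respectively of $F$ on $H_q(Q)$) is handled correctly so that one may legitimately invoke finiteness of $H_p(\text{finite group};\text{f.g. module})$ and of $H_p(F;\text{finite module})$ for $p=0,1$; there is no deep difficulty, only care in citing the standard vanishing and finiteness results for homology of free and finite groups. One should also note that this statement is exactly what is needed as input for the finiteness arguments in Section~\ref{S:more}, via the Brown--Loday type bounds relating $H_*(G)$, $H_*(H)$ and the structure of $G\otimes H$.
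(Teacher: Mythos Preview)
Your proof is correct and follows essentially the same approach as the paper: both arguments split into the two cases $(\mathcal{C}_1)$ and $(\mathcal{C}_2)$, apply the Lyndon--Hochschild--Serre spectral sequence, use that $F$ has homological dimension $1$ to reduce to two rows/columns, and then invoke the standard finiteness facts about $H_q(Q)$ for $q\geq 1$ and about homology of finite (respectively finitely generated) groups with finitely generated (respectively finite) coefficients. The only differences are cosmetic---you treat $(\mathcal{C}_2)$ before $(\mathcal{C}_1)$ and include some extra commentary---but the mathematics is the same.
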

\begin{proof} Case~\ref{C1}: Suppose that $G$ is an extension of a finite group $Q$ by a finitely generated
non-abelian free group $F$:
\[
1\to Q\to G \to F \to 1.
\]
We have the Hochschild-Serre spectral sequence:
\[
H_p(F, H_q(Q))\Rightarrow H_{p+q} (G).
\]
Since $H_p(F, H_q(Q))=0$ for all $p\geq 2$, it suffices to show that
$H_0(F, H_q(Q))$ and $H_1(F, H_q(Q))$ are finite for all $q\geq 1$. Note that $H_q(Q)$ is finite for all $q\geq 1$.
Since $F$ is finitely generated, its homology groups with coefficients in finite $F$-modules are finite.

Case~\ref{C2}: Suppose that $G$ is an extension of a finitely generated non-abelian free group $F$ by a finite group $Q$.
We have the Hochschild-Serre spectral sequence:
\[
H_p(Q, H_q(F))\Rightarrow H_{p+q} (G).
\]
Since $H_q(F)=0$ for all $q\geq 2$, it suffices to show that $H_p(Q, H_0(F))$ and $H_p(Q, H_1(F))$
are finite for all $p\geq 1$. Note that both $H_0(F)$ and $H_1(F)$ are finitely generated abelian groups.
Since $Q$ is finite, its homology groups in positive dimensions with coefficients in finitely generated $Q$-modules are finite.
\end{proof}

\begin{lemma}\label{lemma2}
Let $G$ be a group from the class $\mathcal{C}$ and $N$ be an abelian normal subgroup of $G$. Then $N$ is finite and $G/N\in \mathcal{C}$.
\end{lemma}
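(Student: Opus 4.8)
The plan is to reduce the statement to one classical fact: \emph{a non-abelian free group has no nontrivial abelian normal subgroup}. Granting this, the two possibilities in the definition of $\mathcal{C}$ are handled in parallel, and in each the copy of the free group survives (possibly only up to isomorphism), so the resulting quotient again lies in $\mathcal{C}$ of the same type.

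First I would treat the case where $G$ is of type \eqref{C1}, say $1\to Q\to G\xrightarrow{\,p\,} F\to 1$ with $Q$ finite and $F$ a finitely generated non-abelian free group. If $N\trianglelefteq G$ is abelian, then $p(N)\trianglelefteq F$ is abelian, hence trivial by the classical fact; so $N\subseteq\ker p=Q$ and thus $N$ is finite. Since $N\trianglelefteq Q$ and $Q/N$ is still finite, the extension $1\to Q/N\to G/N\to F\to 1$ shows $G/N\in\mathcal{C}$.

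Next I would treat the case where $G$ is of type \eqref{C2}, say $1\to F\to G\to Q\to 1$ with $F$ finitely generated non-abelian free and $Q$ finite. If $N\trianglelefteq G$ is abelian, then $N\cap F\trianglelefteq F$ is abelian, hence trivial; so $N$ embeds into $G/F\cong Q$ and is finite. Now $FN\trianglelefteq G$, so $FN/N\trianglelefteq G/N$ with $FN/N\cong F/(F\cap N)=F$ finitely generated non-abelian free, while $(G/N)/(FN/N)\cong G/FN$ is a quotient of $Q$, hence finite; thus $G/N$ is an extension of a finite group by a finitely generated non-abelian free group, i.e. $G/N\in\mathcal{C}$.

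The genuine content — and the step I would expect to dwell on, although it is standard — is the fact about free groups. A nontrivial subgroup of a free group $F$ is free (Nielsen--Schreier) and a free abelian group is infinite cyclic, so a nontrivial abelian subgroup is $\langle a\rangle$ for some $a\neq 1$; if it is normal, then $gag^{-1}=a^{\pm 1}$ for all $g\in F$, so the centralizer $C_F(a)$ has index at most $2$ in $F$. But in a free group the centralizer of a nontrivial element is cyclic, whence by the Schreier index formula $F$ itself would have rank at most $1$, contradicting that $F$ is non-abelian. Hence no nontrivial abelian normal subgroup exists, which is exactly what the two cases above require.
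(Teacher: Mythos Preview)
Your proof is correct and follows essentially the same approach as the paper: both cases reduce to the fact that a non-abelian free group has no nontrivial abelian normal subgroup, and the extensions you display for $G/N$ are exactly those the paper writes down (you additionally sketch a proof of the classical fact, which the paper simply asserts). One harmless slip of wording: in the last clause of case~\eqref{C2} the extension $1\to F\to G/N\to G/FN\to 1$ is of type~\eqref{C2} (free-by-finite) in the paper's convention, not ``extension of a finite group by a free group'' as you wrote, though the conclusion $G/N\in\mathcal{C}$ is of course unaffected.
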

\begin{proof} Case~\ref{C1}: Suppose that $G$ is an extension of a finite group $Q$ by a finitely generated
non-abelian free group $F$. Then $NQ/Q$ is a normal subgroup of $G/Q=F$. Since $F$ is a non-abelian free group, $F$ does not contain
a nontrivial abelian normal subgroup. Thus, $NQ/Q =\{1\} \Rightarrow N\subseteq Q \Rightarrow N$ is finite. Moreover, we have an extension
\[
1 \to Q/N \rightarrow G/N \rightarrow F \rightarrow 1
\]
implying that $G/N \in \mathcal{C}$.

Case~\ref{C2}: Suppose that $G$ is an extension of a finitely generated non-abelian free group $F$ by a finite group $Q$. Then, $N\cap F=\{1\}$
because $N\cap F$ is an abelian normal subgroup of a non-abelian free group $F$. Hence, $N=NF/F \subseteq G/F=Q$. This implies that $N$ is
finite. Moreover, we have the following extension:
\[
1 \to F \to G/N \to G/ (NF)\to 1.
\]
Since $G/ (NF)$ is a quotient of a finite group $Q$, $G/N \in \mathcal{C}$.
\end{proof}

\begin{lemma}\label{lemma3}
Let $G$ be a group from the class $\mathcal{C}$ and $N$ be a finite normal subgroup of $G$. Then $G/N\in \mathcal{C}$.
\end{lemma}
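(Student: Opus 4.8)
The plan is to reduce the finite-normal-subgroup case to the two building blocks already established: the structural dichotomy defining $\mathcal{C}$ and the elementary fact that a non-abelian free group has no nontrivial finite normal subgroup. First I would treat Case~\ref{C1}, where $G$ fits into $1\to Q\to G\to F\to 1$ with $Q$ finite and $F$ finitely generated non-abelian free. The image $NQ/Q$ is a finite normal subgroup of $G/Q=F$; since $F$ is non-abelian free it is torsion-free and has no nontrivial finite normal subgroup, so $NQ/Q=\{1\}$, i.e. $N\subseteq Q$. Then $Q/N$ is finite and the quotient sits in the extension $1\to Q/N\to G/N\to F\to 1$, which exhibits $G/N\in\mathcal{C}$ via \eqref{C1}.

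For Case~\ref{C2}, where $1\to F\to G\to Q\to 1$ with $F$ finitely generated non-abelian free and $Q$ finite, I would look at $N\cap F$, a finite normal subgroup of $F$; again because $F$ is non-abelian free this forces $N\cap F=\{1\}$. Hence the composite $N\hookrightarrow G\twoheadrightarrow G/F=Q$ is injective, so $N$ embeds in $Q$, and $NF/F\cong N$ is finite. Now $NF$ is a normal subgroup of $G$ containing $F$, and $F\to G/N$ is still injective (since $N\cap F=\{1\}$) with image a normal subgroup; the quotient $(G/N)/(NF/N)\cong G/(NF)$ is a quotient of the finite group $Q$, hence finite. This gives the extension
\[
1\to F\to G/N\to G/(NF)\to 1,
\]
so $G/N\in\mathcal{C}$ via \eqref{C2}. (Note this argument is essentially the same as the one used in Lemma~\ref{lemma2} for an abelian $N$, the only change being that here finiteness of $N$ is a hypothesis rather than a conclusion.)

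There is no real obstacle here: the lemma is a routine bookkeeping statement, and both cases hinge on the single observation that a finitely generated non-abelian free group contains no nontrivial finite normal subgroup (indeed it is torsion-free, and even centerless, so any finite normal subgroup would be central and trivial). The only point requiring a moment's care is verifying in Case~\ref{C2} that $F$ maps isomorphically onto a \emph{normal} subgroup of $G/N$, so that the displayed short exact sequence is legitimate; this follows because $F$ is normal in $G$ and $N\cap F=\{1\}$. With that in place the conclusion $G/N\in\mathcal{C}$ is immediate in either case.
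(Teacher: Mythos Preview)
Your proof is correct and follows exactly the approach the paper takes: the paper's own argument simply observes that free groups contain no nontrivial finite subgroup and then says the rest ``follows Lemma~\ref{lemma2} mutatis mutandis,'' which is precisely what you have spelled out in both cases. Your remark at the end that this is the same argument as Lemma~\ref{lemma2} with finiteness as hypothesis rather than conclusion is exactly the point.
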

\begin{proof} Note that any free group does not contain a finite nontrivial subgroup. The rest of the proof follows Lemma~\ref{lemma2}
mutatis mutandis.
\end{proof}

\begin{lemma}\label{lemma4}
Let $G$ and $H$ be normal subgroups of some group. Suppose that $H$ is finite and $G\in \mathcal{C}$. Then $H_n(GH)$ is finite
for all $n\geq 2$.
\end{lemma}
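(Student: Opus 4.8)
The plan is to exhibit $GH$ as an extension of $G$ by a finite group and to run the Hochschild--Serre spectral sequence, using Lemma~\ref{lemma1} to bound the homology of the fibre. First I would note that, since $G$ and $H$ are both normal in the ambient group, $GH$ is a subgroup with $G\trianglelefteq GH$, and the second isomorphism theorem gives $GH/G\cong H/(G\cap H)$, a quotient of the finite group $H$; hence $Q:=GH/G$ is finite. We also record that $G\in\mathcal{C}$ is finitely generated (being an extension of a finitely generated group by a finitely generated group), so $H_1(G)\cong G/[G,G]$ is a finitely generated abelian group, while $H_0(G)=\mathbb{Z}$.

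Next I would bring in the Hochschild--Serre spectral sequence $E^2_{p,q}=H_p(Q,H_q(G))\Rightarrow H_{p+q}(GH)$ and argue that $E^2_{p,q}$ is finite for every pair with $p+q\geq 2$. When $q\geq 2$, the module $H_q(G)$ is finite by Lemma~\ref{lemma1}, and the homology of the finite group $Q$ with coefficients in a finite module is finite in all degrees (a quotient of $H_q(G)$ in degree $0$, and a finitely generated abelian group annihilated by $|Q|$ in positive degrees); so $E^2_{p,q}$ is finite for all $p$. When $q=1$ and $p\geq 1$, the group $E^2_{p,1}=H_p(Q,H_1(G))$ is a finitely generated abelian group annihilated by $|Q|$, hence finite. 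When $q=0$ and $p\geq 2$, the group $E^2_{p,0}=H_p(Q,\mathbb{Z})=H_p(Q)$ is finite because $Q$ is a finite group. Since the constraint $p+q\geq 2$ rules out exactly the pairs $(0,0)$ and $(0,1)$, every $E^2_{p,q}$ with $p+q\geq 2$ has been shown to be finite.

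Finally, for $n\geq 2$ the group $H_n(GH)$ carries a finite filtration whose successive quotients are the terms $E^\infty_{p,q}$ with $p+q=n$, and each $E^\infty_{p,q}$ is a subquotient of $E^2_{p,q}$; by the previous paragraph all of these are finite, so $H_n(GH)$ is finite. The only delicate point is the bookkeeping in the middle step: the two potentially infinite entries $E^2_{0,0}=\mathbb{Z}$ and $E^2_{0,1}=H_1(G)_Q$ live on the line $p+q\leq 1$ and therefore never feed into $H_n(GH)$ for $n\geq 2$, whereas everything else on the $E^2$-page in total degree at least $2$ is finite for the elementary reasons above. I do not anticipate a genuine obstacle beyond making this dichotomy precise and citing the standard finiteness facts for the homology of a finite group with finitely generated, respectively finite, coefficients.
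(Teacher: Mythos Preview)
Your proof is correct and follows essentially the same route as the paper: both run the Hochschild--Serre spectral sequence for the extension $1\to G\to GH\to GH/G\to 1$, invoke Lemma~\ref{lemma1} for $H_q(G)$ with $q\ge 2$, and use that the positive-degree homology of the finite quotient with finitely generated coefficients is finite. The only slip is verbal: the constraint $p+q\ge 2$ also rules out $(1,0)$, not just $(0,0)$ and $(0,1)$, but since $E^2_{1,0}=H_1(Q)$ is finite anyway and your three cases already cover every pair with $p+q\ge 2$, this does not affect the argument.
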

\begin{proof}
Denote the quotient group $GH/G$ by $H'$ and consider the Hochschild-Serre spectral sequence:
\[
H_p(H', H_q(G)) \Rightarrow H_{p+q}(GH).
\]
By Lemma~\ref{lemma1} we have that $H_q(G)$ is finite for all $q\geq 2$. Since $H'$ is finite, we have
that $H_p(H', H_q(G))$ is finite for all $q\geq 2$. Moreover, both $H_0(G)$ and $H_1(G)$ are finitely generated abelian groups. This implies that
$H_p(H', H_0(G))$ and $H_p(H', H_1(G))$ are finite for all $p\geq 1$. Hence $H_p(H', H_q(G))$ is finite for all $p+q\geq 2$.
\end{proof}

The idea of the proof of the next result is inspired by the proof of the main result in \cite{Ell87f}.

\begin{theorem} \label{finite}
Let $G$ be a group belonging to the class $\mathcal{C}$ and let $H$ be a finite group. If $G$ and $H$ act on each other compatibly, then $G\otimes H$ is finite.
\end{theorem}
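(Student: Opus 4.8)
The plan is to follow the strategy of Ellis \cite{Ell87f}, reducing the finiteness of $G\otimes H$ to finiteness statements about integral homology that are then furnished by Lemmas~\ref{lemma1} and~\ref{lemma4}. The starting point is the crossed module $\phi_{G}\colon G\otimes H\to D_{H}(G)$ of \cite[Proposition~2.3]{BroLod87}: since $\ker\phi_{G}=:A$ is central, we have a central extension
\[
1\longrightarrow A\longrightarrow G\otimes H\longrightarrow D_{H}(G)\longrightarrow 1,
\]
so it suffices to prove that $D_{H}(G)$ and $A$ are each finite.

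First, $D_{H}(G)$ is finite. Let $\rho\colon H\to\operatorname{Aut}(G)$ be the given action, so $\rho(H)$ is a finite subgroup of $\operatorname{Aut}(G)$, and write $\iota_{g}$ for conjugation by $g$. The compatibility condition ${}^{{}^{g}h}g'={}^{ghg^{-1}}g'$ says precisely that $\rho({}^{g}h)=\iota_{g}\,\rho(h)\,\iota_{g}^{-1}$; hence $[\iota_{g},\rho(h)]=\rho({}^{g}h)\rho(h)^{-1}\in\rho(H)$, so $\operatorname{Inn}(G)$ normalises $\rho(H)$. The image in $\operatorname{Inn}(G)=G/Z(G)$ of a generator $g\,{}^{h}g^{-1}$ of $D_{H}(G)$ is $\iota_{g\,{}^{h}g^{-1}}=[\iota_{g},\rho(h)]$, so the image of $D_{H}(G)$ in $G/Z(G)$ lies in the finite group $\rho(H)$; as $Z(G)$ is finite by Lemma~\ref{lemma2} (it is abelian and normal in $G\in\mathcal C$), $D_{H}(G)$ is finite. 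In particular $G\otimes H$ is a central extension of a finite group, and everything comes down to showing $A$ finite.

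This last point is the crux, and here one reproduces Ellis's homological argument: $A=\ker(G\otimes H\to D_{H}(G))$ fits into a long exact sequence of Brown--Loday type — the analogue for $G\otimes H$ of Whitehead's sequence $H_{3}(G)\to\Gamma(G^{\mathrm{ab}})\to J(G)\to H_{2}(G)\to 0$ for the tensor square — in which $A$ is flanked by low-dimensional homology groups of $G$, of $H$, and of an auxiliary group assembled from $G$ and $H$ (such as $G\rtimes H$), together with ``quadratic'' terms obtained by applying quadratic functors and $\operatorname{Tor}$ to the abelianisations of $G$ and $H$. Now $H_{n}(G)$ is finite for all $n\ge 2$ by Lemma~\ref{lemma1}; $H_{n}(H)$ is finite for $n\ge 1$ because $H$ is finite; the auxiliary group has $G$ as a normal subgroup of finite index with $G\in\mathcal C$, so its homology in degrees $\ge 2$ is finite by the argument of Lemma~\ref{lemma4}; and every remaining quadratic term is finite because it is built from the finite group $H^{\mathrm{ab}}$. (This is exactly where finiteness of the whole of $H$, rather than just of $D_{G}(H)$, is used: for the tensor square the corresponding term is $\Gamma(G^{\mathrm{ab}})$, which need not be finite.) Hence $A$ is sandwiched between finite groups, so $A$ is finite and therefore $G\otimes H$ is finite.

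I expect the main obstacle to be this third step: making the Brown--Loday exact sequence completely explicit in the present non-abelian, non-square situation — pinning down every error term as one of the homology groups, or functors of them, controlled above — and carrying out the underlying spectral-sequence estimates, which are of the same character as, but considerably more elaborate than, the proof of Lemma~\ref{lemma4}.
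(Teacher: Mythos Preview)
Your first two paragraphs are fine. The central extension $1\to A\to G\otimes H\to D_H(G)\to 1$ is standard, and your argument that $D_H(G)$ is finite is correct and pleasant: the compatibility relation does give $\iota_{g\,{}^hg^{-1}}=[\iota_g,\rho(h)]\in\rho(H)$, so the image of $D_H(G)$ in $G/Z(G)$ is finite, and $Z(G)$ is finite by Lemma~\ref{lemma2}. The paper does not isolate this fact; it proves finiteness of $G\otimes H$ directly and gets $D_H(G)$ finite as a quotient.

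The gap is exactly where you locate it, and it is a real one. The Brown--Loday sequence you invoke, namely
\[
H_3(GH/H)\oplus H_3(GH/G)\longrightarrow \Ker\big(G\wedge H\to [G,H]\big)\longrightarrow H_2(GH),
\]
is only available when $G$ and $H$ are \emph{normal} subgroups of a common group acting on one another by conjugation. Your suggested auxiliary group $G\rtimes H$ does not do this: $G$ is normal there, but $H$ is only a complement. So there is no group ``$GH$'' to take homology of, and the sequence does not exist in the form you need. Knowing $D_H(G)$ is finite does not, by itself, bypass this difficulty --- a finitely generated centre-by-finite group can still be infinite.

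The paper closes precisely this gap. It passes from $G\rtimes H$ to the Peiffer quotient $G\circ H=(G\rtimes H)/(G,H)$, in which the images $\mu(G)$ and $\nu(H)$ \emph{are} both normal (because $\mu$ and $\nu$ are crossed modules). One then has an exact sequence
\[
(G\otimes\Ker\nu)\oplus(H\otimes\Ker\mu)\longrightarrow G\otimes H\longrightarrow \mu(G)\otimes\nu(H)\longrightarrow 1.
\]
The right-hand term is handled by the ``special case'' (the Brown--Loday sequence now applies inside $G\circ H$, and Lemmas~\ref{lemma1}--\ref{lemma4} supply the needed finiteness of $H_2$ and $H_3$). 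The left-hand terms are finite because $\Ker\nu$ is finite and Guin's identification gives $G\otimes\Ker\nu\cong I(G)\otimes_{\mathbb Z[G]}\Ker\nu$ with $I(G)$ a finitely generated $\mathbb Z[G]$-module, while $\Ker\mu$ is finite by Lemma~\ref{lemma2} (it is abelian and normal in $G$). This two-stage reduction --- first to normal subgroups via $G\circ H$, then the homological estimate --- is the missing content of your third paragraph.
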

\begin{proof}
Special Case: Suppose that $G$ and $H$ are normal subgroups of some group and that they act on each other by conjugation. From \cite{BroLod84} we have an exact
sequence
\begin{equation}\label{E:h3}
 H_3(GH/H)\oplus H_3(GH/G) \to (\Ker [\;,\;] \colon G\wedge H \to [G, H])\to H_2(GH),
\end{equation}
where $[\;,\;] \colon G\wedge H \to [G, H]$ is defined by $g\wedge h \mapsto [g, h]$ for all $g\in G$ and $h\in H$. Since $GH/G$ is finite,
$H_3(GH/G)$ is also finite. Since $GH/H$ is the quotient of $G$ by a finite group, Lemmas~\ref{lemma3} and  \ref{lemma1} imply that
$H_3(GH/H)$ is finite. By Lemma~\ref{lemma4} we have that $H_2(GH)$ is also finite. Hence $G\wedge H$ is finite.
From \cite{BroLod87} we have an exact sequence
\begin{equation}\label{E:gamma}
 \Gamma (G\cap H /[G, H])\to G\otimes H \to G\wedge H \to 1,
\end{equation}
where $\Gamma$ is the  Whitehead's universal quadratic functor \cite{Whi52}. Since $G\cap H /[G, H]$ is finite abelian group,
$\Gamma (G\cap H /[G, H])$ is finite. Hence $G\otimes H$ is finite.

General Case: Suppose that $G$ and $H$ are as in the proposition. Let $(G, H)$ be the normal subgroup of the semidirect product $G\rtimes H$
generated by the elements $(g\,^hg^{-1}, h\,^gh^{-1})$  for all $g\in G$ and $h\in H$. Set $G \circ H = G \rtimes H / (G, H)$. There is an action
of $G \circ H$ on $G$ and on $H$ given by $^{(g,h)}g'=\;^g(^hg')$ and $^{(g,h)}h'=\;^g(^hh')$ for $g, g'\in G$, $h, h'\in H$, and the
natural homomorphisms $\mu \colon G \to G \circ H$ and $\nu \colon H \to G \circ H$ together with these actions are crossed modules. Hence, $\Ker \mu$
and $\Ker\nu$ are abelian groups acting trivially on $H$ and on $G$, respectively. Therefore, by \cite{Gu88} we have that
$G\otimes \Ker \nu  = I(G)\otimes _{\mathbb{Z}(G)}\Ker\nu $, which is finite because $\Ker\nu$ is finite and $I(G)$ is finitely generated
$\mathbb{Z}(G)$-module. By Lemma~\ref{lemma2} we have that $\Ker\mu$ is finite. This implies that $H\otimes \Ker \mu$ is finite.
Since $\mu \colon G \to G \circ H$ and $\nu \colon H \to G \circ H$ are crossed modules, $\mu (G)$ and $\nu (H)$ will be normal subgroups of $G \circ H$.
Moreover, $\mu (G)\in \mathcal{C}$ (by Lemma~\ref{lemma2}) and $\nu (H)$ is finite. Hence $\mu (G)\otimes \nu (H)$ is finite. Thus, the
exact sequence
\begin{equation}\label{E:ker}
 G\otimes \Ker \nu \oplus H\otimes \Ker \mu \to G\otimes H \to \mu (G)\otimes \nu (H) \to 1
\end{equation}
implies that $G\otimes H$ is finite.
\end{proof}

Straightforward from this proposition we have the following result.

\begin{corollary}
Let $F$ be a finitely generated non-abelian free group and let $H$ be a finite group. If $F$ and $H$ act on each other compatibly,
then $F\otimes H$ is finite.
\end{corollary}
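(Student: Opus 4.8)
The plan is to obtain this as an immediate special case of Theorem~\ref{finite}, so the only thing I need to check is that a finitely generated non-abelian free group $F$ itself lies in the class $\mathcal{C}$. This I would verify by taking the finite group in the definition to be trivial: the (split) exact sequence $1\to\{1\}\to F\to F\to 1$ exhibits $F$ as an extension of a finite group by a finitely generated non-abelian free group, which is exactly the shape \eqref{C1}. Equally well, $1\to F\to F\to\{1\}\to 1$ places $F$ in the shape \eqref{C2}. The point is that the definition of $\mathcal{C}$ imposes no nontriviality condition on $Q$; only the free group is required to be non-abelian, and that is built into the hypothesis on $F$.

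Once $F\in\mathcal{C}$ is in hand, all the hypotheses of Theorem~\ref{finite} are satisfied --- $F\in\mathcal{C}$, $H$ is finite, and $F$ and $H$ act on each other compatibly by assumption --- so the theorem directly gives that $F\otimes H$ is finite, which is the claim.

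I do not expect any genuine obstacle here: the argument is just the membership check above followed by one application of Theorem~\ref{finite}, which is why the sentence preceding the statement describes it as straightforward. All of the real content has already been absorbed into Lemmas~\ref{lemma1}--\ref{lemma4} and the proof of Theorem~\ref{finite}.
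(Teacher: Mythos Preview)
Your proposal is correct and matches the paper's approach exactly: the paper simply states that the corollary is ``straightforward from this proposition,'' and your observation that $F\in\mathcal{C}$ via the trivial-$Q$ extension is precisely the (only) missing verification. There is nothing more to add.
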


\begin{corollary}
Let $G$ be a finitely generated group and $H$ be finite. Suppose that $G$ and $H$ act on each other compatibly and that $H$ acts on $G$
trivially. Then $G\otimes H$ is finite.
\end{corollary}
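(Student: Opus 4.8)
The plan is to reduce the statement to the case of a finitely generated non-abelian free group, which is exactly the content of the previous corollary. Fix a finite generating set of $G$, enlarging it if necessary so that it has at least two elements; this produces a surjection $\pi\colon F\twoheadrightarrow G$ from a finitely generated non-abelian free group $F$. The crucial point is that, precisely because $H$ acts trivially on $G$, the compatible pair of actions on $(G,H)$ pulls back along $\pi$ to a compatible pair on $(F,H)$: let $H$ act trivially on $F$, and let $F$ act on $H$ through $\pi$, that is, ${}^{f}h := {}^{\pi(f)}h$ for $f\in F$ and $h\in H$.

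First I would check that $(F,H)$ is again a compatible pair. Using that $H$ acts trivially on $F$ (and on $G$), each of the two compatibility identities of Definition~\ref{D:2.1} for $(F,H)$ collapses to the corresponding identity for $(G,H)$ evaluated at $\pi(f)$, so this is a one-line verification. Then I would invoke the standard functoriality of the non-abelian tensor product with respect to equivariant maps of pairs (as already used in Section~\ref{S:clos} for subgroup inclusions): the pair $(\pi,\mathrm{id}_{H})$ is equivariant for the chosen actions, hence induces a homomorphism $F\otimes H\to G\otimes H$ sending $f\otimes h$ to $\pi(f)\otimes h$. Since $\pi$ is onto, the elements $\pi(f)\otimes h$ generate $G\otimes H$, so this homomorphism is surjective.

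It then suffices to know that $F\otimes H$ is finite, and this is immediate from the previous corollary, since $F$ is a finitely generated non-abelian free group, $H$ is finite, and the two act on each other compatibly. A quotient of a finite group is finite, so $G\otimes H$ is finite.

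The step I expect to be delicate is the pullback of actions: for a general, nontrivial action of $H$ on $G$ one cannot in general lift it to an $H$-action on a free cover $F$ — a set-theoretic lift of an automorphism of $G$ need not be an automorphism of $F$, and the cocycle (group-law) condition for an action may fail — so the hypothesis that $H$ acts trivially on $G$ is doing real work, a trivial action lifting to a trivial action with no obstruction. I would also be slightly careful in the degenerate cases where $G$ is trivial or cyclic, which is why I insist on choosing $F$ of rank at least $2$ even when fewer generators would suffice. Finally, that some such hypothesis is genuinely needed is shown by the example preceding this subsection, where $G$ is finitely generated, $H$ is finite, $H$ acts on $G$ nontrivially, and $G\otimes H$ is infinite.
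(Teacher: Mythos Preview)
Your proof is correct and follows essentially the same route as the paper: choose an epimorphism $\pi\colon F\twoheadrightarrow G$ from a finitely generated non-abelian free group, let $F$ act on $H$ via $\pi$ and $H$ act trivially on $F$, check compatibility, and use the induced surjection $F\otimes H\to G\otimes H$ together with the previous corollary. Your additional remarks on why the trivial-action hypothesis is needed and on ensuring $F$ has rank at least two are accurate and worth keeping, but the underlying argument matches the paper's proof line for line.
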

\begin{proof}
There exist a finitely generated non-abelian free group $F$ and an epimorphism $\tau \colon F\to G$. Define
an action of $F$ on $H$ by $^xh=\;^{\tau(x)}h$ for all $x\in F$ and $h\in H$. Moreover, suppose that $H$ acts trivially on $F$. Then we have
mutual compatible actions of $F$ and $H$, and we have an epimorphism $F\otimes H \to G\otimes H$ induced by $\tau$.
By the previous corollary $F\otimes H$ is finite. Hence $G\otimes H$ is finite.
\end{proof}

Let $G$ be an extension of a finite group by a finitely generated free abelian group or an extension of a finitely generated free abelian group by a finite group.
 Then $G$ does not belong to $\mathcal{C}$. If $H$ is a finite group, then the next remark shows that $G\otimes H$ need not be finite.

\begin{remark}
Suppose that $G$ is either an extension of $\mathbb{Z}$ by a finite group, or an extension of a finite group
by $\mathbb{Z}$. In this case $G\otimes H$ is not always finite for a finite group $H$. For instance, assume
that $G=\mathbb{Z}$ and $H=\mathbb{Z}/2\mathbb{Z}=\langle t \mid t+t=0\rangle$. Define an action of $H$ on $\mathbb{Z}$
by $^tn=-n$ for each $n\in \mathbb{Z}$. Moreover, assume that $\mathbb{Z}$ acts trivially on $H$. Then $\mathbb{Z}$ and $H$
act on each other compatibly but $\mathbb{Z}\otimes H$ is isomorphic to $\mathbb{Z}$.
\end{remark}

\section{Tensor product of finitely generated groups} \label{S:tensor}

In \cite{BCM}, the authors prove that the integral homology and cohomology groups of polycyclic by finite groups is finitely generated.
 So it is natural to ask if the same result holds for Noetherian groups. In this paper, we want to restrict our attention
  to the study of Schur multiplier of Noetherian groups.  A very natural approach to study this problem is to consider a Noetherian group $G$
   and look at its tensor square $G\otimes G$. If we can prove that the tensor square is a Noetherian group,
   then it follows that the exterior square is Noetherian, and  thereby the Schur multiplier is Noetherian.
    It is more natural to consider the class of finitely generated groups $G$ and $H$ and to study the properties of the non-abelian tensor product
     of finitely generated groups before we embark on the study of tensor squares of Noetherian groups.
      First notice that for finitely generated groups $G$ and $H$ acting on each other compatibly,
       their non-abelian tensor product need not always be finitely generated. For example, if $G$ is a non-abelian free group,
        then its tensor square is not finitely generated. But we have the following necessary and sufficient conditions for $G\otimes H$ to be finitely generated.

\begin{proposition}
Let $G$ and $H$ be groups acting on each other compatibly. If $G$ and $H$ are finitely generated, then $G\otimes H$ is finitely generated if and only if
$D_G(H)$ and $D_H(G)$ are finitely generated.
\end{proposition}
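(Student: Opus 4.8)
The statement is an equivalence; the forward implication is easy and the converse is the real content. Throughout, write $\phi_G\colon G\otimes H\to D_H(G)$, $g\otimes h\mapsto g\,{}^{h}g^{-1}$, for the crossed module of Brown--Loday recalled above, and $\phi_H\colon G\otimes H\to D_G(H)$, $g\otimes h\mapsto {}^{g}h\cdot h^{-1}$, for its mirror image under the canonical isomorphism $G\otimes H\cong H\otimes G$. Both are surjective, because their images already contain the generators $g\,{}^{h}g^{-1}$, resp.\ ${}^{g}h\cdot h^{-1}$, of the target. Hence, if $G\otimes H$ is finitely generated, then so are the homomorphic images $D_H(G)$ and $D_G(H)$, which settles the ``only if'' part.

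For the converse, suppose $D_H(G)$ and $D_G(H)$ are finitely generated. The plan is to use the central extension $1\to A\to G\otimes H\xrightarrow{\phi_G} D_H(G)\to 1$, where $A=\Ker\phi_G$ is central in $G\otimes H$ since $\phi_G$ is a crossed module. An extension of a finitely generated group by a finitely generated group is finitely generated (generators of the kernel together with lifts of generators of the quotient do the job), so, $D_H(G)$ being finitely generated by hypothesis, it suffices to show that $A$ is finitely generated --- and this is exactly where the finiteness of \emph{both} derivatives will be used.

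To prove $A$ is finitely generated I would fix finite generating sets $X$ of $G$, $Y$ of $H$, and $\{d_1,\dots,d_p\}$, $\{e_1,\dots,e_q\}$ of $D_H(G)$, $D_G(H)$. First, using the two families of defining relations $gg'\otimes h=({}^{g}g'\otimes{}^{g}h)(g\otimes h)$ and $g\otimes hh'=(g\otimes h)({}^{h}g\otimes{}^{h}h')$ to unravel an arbitrary $g\otimes h$, together with the identity ${}^{(g_0\otimes h_0)}(g\otimes h)={}^{g_0\,{}^{h_0}g_0^{-1}}g\otimes {}^{g_0\,{}^{h_0}g_0^{-1}}h$ used in the proof of Proposition~\ref{nilpotent} (conjugation in $G\otimes H$ is realised by the action of an element of $D_H(G)$), one shows that $G\otimes H$ is generated by the $x\otimes y$ ($x\in X$, $y\in Y$) \emph{translated by the actions of $G$ and of $H$}. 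The correction terms appearing in this process lie in $D_H(G)$ and $D_G(H)$ --- indeed $g\,{}^{y}g^{-1}\in D_H(G)$ and ${}^{x}y\cdot y^{-1}\in D_G(H)$ --- and since $D_H(G)\trianglelefteq G$ and $D_G(H)\trianglelefteq H$ are finitely generated, the translations that actually occur can be reduced, modulo $A$, to words in finitely many of them; recording the resulting central elements yields a finite generating set for $A$. A more homological implementation of this last step: by the Whitehead sequence \eqref{E:gamma} (applied to the normal subgroups $\mu(G),\nu(H)$ of $G\circ H$, as in the proof of Theorem~\ref{finite}) the kernel of $G\otimes H\twoheadrightarrow G\wedge H$ is a quotient of $\Gamma$ of a finitely generated abelian group, hence finitely generated, so one may replace $G\otimes H$ by $G\wedge H$; and the kernel of the induced crossed module $G\wedge H\to D_H(G)$, a relative Schur multiplier, is finitely generated by a Hochschild--Serre spectral sequence argument in the spirit of Lemmas~\ref{lemma1}--\ref{lemma4}, now that $D_H(G)$ and $D_G(H)$ are finitely generated.

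I expect the main obstacle to be precisely the finite generation of $A=\Ker\phi_G$ (equivalently, of the relative Schur multiplier): a single symbol $g\otimes h$ carries words of unbounded length in both $G$ and $H$, and the two sets of relations entangle conjugation of the left coordinate by $H$ with conjugation of the right coordinate by $G$, so the bookkeeping that keeps the surviving generators finite --- modulo the central kernel --- is the subtle part. The three features that make it go through are the normality of $D_H(G)$ in $G$ and of $D_G(H)$ in $H$, their finite generation (to cut the conjugators down to finitely many), and the centrality of $A$ in $G\otimes H$ (so the error terms commute and assemble into a finitely generated piece); dropping finiteness of one derivative --- for instance a non-abelian free group, whose derivative is not finitely generated --- breaks both the argument and the conclusion.
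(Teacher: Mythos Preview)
Your forward direction matches the paper. For the converse, reducing to ``$A=\Ker\phi_G$ is finitely generated'' is a detour in the wrong direction: a central subgroup of a finitely generated group need not be finitely generated, so this is a \emph{stronger} claim than the one you need, and under the bare hypotheses it may well fail --- in the case $G=H$ with conjugation actions one has $A=J(G)$, which is finitely generated iff $M(G)$ is, a question the paper itself leaves open even for Noetherian $G$. In fact your own combinatorial sketch never isolates generators of $A$: the identities ${}^{x}(a\otimes b)=(x\otimes {}^{a}bb^{-1})(a\otimes b)$ and ${}^{y}(a\otimes b)=(a\otimes b)(a\,{}^{b}a^{-1}\otimes y)^{-1}$ produce factors in $G\otimes H$, not in $A$, and what they actually give --- once carried through --- is a finite generating set for $G\otimes H$ directly. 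That is precisely the paper's argument: it writes down the explicit finite list $x_i^\alpha\otimes y_j^\beta$, $g_i\otimes h_i$, $g'_j\otimes h'_j$, $x_i^\alpha\otimes({}^{g_j}h_jh_j^{-1})^\beta$, $((g'_i){}^{h'_i}(g'_i)^{-1})^\alpha\otimes y_j^\beta$ and checks, using the two identities above together with ${}^{[g,h]}(-)={}^{(g\otimes h)}(-)$, that this list is closed (up to products and inverses) under conjugation by each generator $x_i^{\pm1}$, $y_j^{\pm1}$. So drop the passage through $A$ and run your combinatorial idea to its natural conclusion.

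The ``more homological implementation'' has genuine gaps. Nothing in the hypotheses forces $\mu(G)\cap\nu(H)/[\mu(G),\nu(H)]$ to be finitely generated, so the $\Gamma$-term in \eqref{E:gamma} is not controlled; and a spectral sequence argument ``in the spirit of Lemmas~\ref{lemma1}--\ref{lemma4}'' would require finite generation of homology groups that does not follow from finite generation of $G$, $H$, $D_G(H)$, $D_H(G)$ --- those lemmas work only because groups in the class $\mathcal{C}$ have very restricted homology, and that does not transfer to arbitrary finitely generated groups.
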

\begin{proof}
One direction is clear because we have well defined epimorphisms $G\otimes H \to D_G(H)$ and $G\otimes H \to D_H(G)$.

We aim to show that if $D_G(H)$ and $D_H(G)$ are finitely generated, then so is $G\otimes H$.
Suppose that $x_1, \dots, x_n \in G$ are generators for $G$ and that $y_1, \dots, y_m \in H$ are generators for $H$.
Suppose $D_G(H)$ has generators of the form $^{g_1}h_1h_1^{-1}, \dots, \;^{g_p}h_ph_p^{-1}$,
for $g_1, \dots, g_p\in G$ and $h_1, \dots, h_p\in H$. Similarly $D_H(G)$ has generators of the form $(g'_1)\:^{h'_1}(g'_1)^{-1}, \dots, \,
(g'_q)\:^{h'_q}(g'_q)^{-1}$  for $g'_1, \dots, g'_q\in G$ and $h'_1, \dots, h'_q\in H$.
We will show that the following elements
\begin{equation}\label{generators}
x_i^\alpha\otimes y_j^\beta, \; g_i\otimes h_i, \; g'_j\otimes h'_j, \; x_i^\alpha\otimes (^{g_j}h_jh_j^{-1})^\beta, \;
((g'_i)^{h'_i}(g'_i)^{-1})^\alpha \otimes y_j^\beta,
\end{equation}
for $\alpha, \beta \in \{1, -1\}$, generate $G\otimes H$. Using the defining relations of non-abelian tensor product
it is easy to see that each element of $G\otimes H$ can be factored into a product of the elements $^z(x_i^\alpha\otimes y_j^\beta)$
for $z\in G*H$. Hence, it is enough to show that this element can be factored into a product of elements from the list \eqref{generators}
and their inverses. For the latter, it suffices to show that if $a\otimes b \in G\otimes H$ is an element from the list \eqref{generators},
then $^{x_i^{\alpha}}(a\otimes b)$ and $^{y_i^{\alpha}}(a\otimes b)$ can be factored into a product of elements
from the list \eqref{generators} and their inverses. We have the following relations \cite{BJR87}:
\begin{align*}
^{x_i^{\alpha}}(a\otimes b)& =(x_i^{\alpha} \otimes \,^{a}bb^{-1})(a\otimes b); \\
^{y_i^{\alpha}}(a\otimes b) & =(a\otimes b)(a \, ^ba^{-1}\otimes y_i^{\alpha})^{-1}.
\end{align*}
Hence to finish the proof we need to show that $x_i^{\alpha} \otimes \,^{a}bb^{-1}$ and $a \, ^ba^{-1}\otimes y_i^{\alpha}$
can be factored into a product of elements from the list \eqref{generators} and their inverses. We have $^{a}bb^{-1}\in D_G(H)$,
hence it is a finite product of $^{g_j}h_jh_j^{-1}$'s and their inverses. Suppose that $^{a}bb^{-1}=\;^{g_1}h_1h_1^{-1} \overline{h}$
where $\overline{h}$ is also a finite product of $^{g_j}h_jh_j^{-1}$'s and their inverses. Then
\begin{align*}
x_i^{\alpha} \otimes \,^{a}bb^{-1} & = x_i^{\alpha} \otimes \,^{g_1}h_1h_1^{-1} \overline{h} =
(x_i^{\alpha} \otimes \,^{g_1}h_1h_1^{-1})^{^{g_1}h_1h_1^{-1}}(x_i^{\alpha} \otimes \overline{h})\\
{}&=
(x_i^{\alpha} \otimes \,^{g_1}h_1h_1^{-1})^{[g_1, h_1]}(x_i^{\alpha} \otimes \overline{h})\\
{}&=
(x_i^{\alpha} \otimes \,^{g_1}h_1h_1^{-1})(g_1\otimes h_1)(x_i^{\alpha} \otimes \overline{h})(g_1\otimes h_1)^{-1}.
\end{align*}
Proceeding by induction we will obtain that $x_i^{\alpha} \otimes \,^{a}bb^{-1}$ can be written as a product of elements
 from the list \eqref{generators} and their inverses. Similarly we can prove the same for $a \, ^ba^{-1}\otimes y_i^{\alpha}$.
\end{proof}

\begin{corollary}
Let $G$ be a finitely generated group. Then $G\otimes G$ is finitely generated if and only if $[G, G]$ is finitely generated.
\end{corollary}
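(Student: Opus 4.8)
The plan is to obtain this as an immediate consequence of the preceding proposition, applied with $H=G$ and all actions taken to be conjugation. First I would note that a group acting on itself by conjugation satisfies the compatibility conditions of Definition~\ref{D:2.1} automatically, so the tensor square $G\otimes G$ is covered by that proposition. The next step is to identify the relevant derivative subgroup: by definition $D_G(G)=\langle g\,^{g'}g^{-1}\mid g,g'\in G\rangle$, and in the left-conjugation convention adopted here $g\,^{g'}g^{-1}=gg'g^{-1}g'^{-1}=[g,g']$, so $D_G(G)=[G,G]$. Consequently the two conditions in the proposition, that $D_G(H)$ and $D_H(G)$ be finitely generated, coincide in the tensor-square case and both amount to the single statement that $[G,G]$ is finitely generated.

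Putting these observations together, the proposition yields at once that $G\otimes G$ is finitely generated if and only if $[G,G]$ is finitely generated, which is the claim. There is no real obstacle here; the only things needing a moment's care are the bookkeeping in the identity $D_G(G)=[G,G]$ under the stated conjugation convention, and the (easy) observation that in the tensor-square situation the two derivative subgroups appearing in the proposition are literally the same subgroup, so no extra hypothesis is hidden.

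If one prefers a self-contained argument not routed through the proposition, the same two implications can be seen directly. The homomorphism $\kappa\colon G\otimes G\to[G,G]$ sending $g\otimes h$ to $[g,h]$ is surjective, so finite generation of $G\otimes G$ forces finite generation of $[G,G]$. Conversely, when $[G,G]=D_G(G)$ is finitely generated, the explicit finite list of generators constructed in the proof of the proposition specialises, with $H=G$, to a finite generating set of $G\otimes G$; thus $G\otimes G$ is finitely generated.
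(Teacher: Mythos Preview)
Your proposal is correct and is exactly the intended argument: the paper states this corollary without proof, as an immediate specialisation of the preceding proposition with $H=G$ and conjugation actions, using $D_G(G)=[G,G]$. Your additional self-contained remarks are accurate but unnecessary for the corollary as stated.
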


\begin{corollary}
Let $G$ and $H$ be Noetherian groups acting on each other compatibly. Then $G\otimes H$ is finitely generated.
\end{corollary}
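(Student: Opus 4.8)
The plan is to read this off immediately from the preceding proposition. First I would observe that a Noetherian group is in particular finitely generated (apply the ``every subgroup is finitely generated'' condition, or equivalently the ascending chain condition, to the group itself). Hence both $G$ and $H$ are finitely generated, and the hypotheses of the previous proposition are met; it remains only to verify that $D_G(H)$ and $D_H(G)$ are finitely generated.

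For this, the key (and essentially only) observation is that these ``derivative'' subgroups sit inside $G$ and $H$ respectively. Indeed, $D_H(G)=\langle g\,{}^hg^{-1}\mid g\in G,\ h\in H\rangle$ is by definition generated by elements of $G$ — each $g\,{}^hg^{-1}$ lies in $G$ because $H$ acts on $G$ — so $D_H(G)$ is a subgroup of $G$; symmetrically, $D_G(H)$ is a subgroup of $H$. Since $G$ and $H$ are Noetherian, every subgroup of each is finitely generated, so in particular $D_H(G)$ and $D_G(H)$ are finitely generated. The previous proposition then gives that $G\otimes H$ is finitely generated, which is the assertion.

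I do not expect any genuine obstacle here: all the substance is contained in the preceding proposition, and this corollary merely repackages it. The only point worth stating explicitly in the write-up is that $D_H(G)\subseteq G$ and $D_G(H)\subseteq H$, so that the Noetherian hypothesis can be applied to them directly.
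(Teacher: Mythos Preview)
Your proposal is correct and is precisely the intended argument: the corollary is stated without proof in the paper because it follows immediately from the preceding proposition once one notes that $D_H(G)\subseteq G$ and $D_G(H)\subseteq H$ are subgroups of Noetherian groups, hence finitely generated. There is nothing to add.
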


\begin{proposition}
Let $G$ and $H$ be Noetherian groups acting on each other compatibly. If $D_H(G)$ is finite, then $G\otimes H$ is Noetherian.
\end{proposition}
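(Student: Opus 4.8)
The plan is to exploit the central extension furnished by the crossed module $\phi_G\colon G\otimes H\to D_H(G)$, $\phi_G(g\otimes h)=g\,^hg^{-1}$. Since the image of $\phi_G$ contains every generator $g\,^hg^{-1}$ of $D_H(G)$, the map $\phi_G$ is surjective, so we obtain a short exact sequence
\[
1\to J\to G\otimes H\xrightarrow{\;\phi_G\;}D_H(G)\to 1,
\]
where $J=\Ker\phi_G$ is, by the lemma on kernels of crossed modules, a central (hence abelian) subgroup of $G\otimes H$. Because $D_H(G)$ is assumed finite, $J$ has finite index in $G\otimes H$.

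Next I would feed in the preceding corollary: since $G$ and $H$ are Noetherian and act compatibly, $G\otimes H$ is finitely generated. A subgroup of finite index in a finitely generated group is itself finitely generated, so $J$ is a finitely generated abelian group, in particular polycyclic, and therefore Noetherian. Thus $G\otimes H$ is an extension of the Noetherian group $J$ by the finite (hence Noetherian) group $D_H(G)$; equivalently, $G\otimes H$ is polycyclic-by-finite.

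To conclude I would verify the standard fact that such an extension satisfies the maximal condition on subgroups: for any subgroup $S\le G\otimes H$, the intersection $S\cap J$ is finitely generated as a subgroup of the Noetherian group $J$, while $S/(S\cap J)$ embeds in $D_H(G)$ and so is finite; hence $S$ is finitely generated. Therefore $G\otimes H$ is Noetherian.

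There is no real obstacle here: the one nontrivial ingredient, the finite generation of $G\otimes H$, is handed to us by the previous corollary, and the remainder is the routine assembly of two standard facts — that finite-index subgroups of finitely generated groups are finitely generated, and that extensions of Noetherian groups by Noetherian groups are Noetherian. The only point that requires care is recognizing that the crossed-module kernel $J$ is simultaneously central (so that it is abelian, hence polycyclic once finitely generated) and of finite index (so that the preceding corollary applies to transfer finite generation from $G\otimes H$ down to $J$).
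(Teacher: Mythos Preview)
Your argument is correct and follows exactly the same route as the paper: use the central extension $1\to A\to G\otimes H\to D_H(G)\to 1$, invoke the preceding corollary to conclude $G\otimes H$ is finitely generated, deduce that the finite-index central kernel $A$ is a finitely generated abelian group, and finish by noting that an extension of a Noetherian group by a Noetherian group is Noetherian. The only difference is that you spell out the verification of this last closure property, which the paper simply asserts.
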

\begin{proof}
Consider the short exact sequence $1\to A\to G\otimes H\to D_H(G)\to 1$, where $A$ is a central subgroup of $G\otimes H$.
 Since $A$ is a finite index subgroup of a finitely generated group, it is a finitely generated abelian group, and hence Noetherian.
  Thus $G\otimes H$ is a Noetherian group, as it is an extension of a Noetherian group by a Noetherian group.
\end{proof}

\begin{corollary}
Let $G$ be a finite group and let $H$ be a Noetherian group acting on each other. If the mutual actions are compatible, then $G\otimes H$ is a Noetherian group.
\end{corollary}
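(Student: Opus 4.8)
The plan is to deduce this directly from the preceding proposition. First I would note that a finite group is Noetherian, since all of its (necessarily finite) subgroups are finitely generated; hence both $G$ and $H$ are Noetherian groups acting compatibly on one another. Second, I would observe that by definition $D_H(G)=\left\langle g\,^hg^{-1}\mid g\in G,\ h\in H\right\rangle$ is a subgroup of $G$, and therefore finite. These are precisely the hypotheses of the previous proposition, and so $G\otimes H$ is Noetherian.

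For readers who prefer to see the mechanism rather than a black-box citation, I would recall the short exact sequence $1\to A\to G\otimes H\xrightarrow{\;\phi_G\;} D_H(G)\to 1$, where $A=\Ker\phi_G$ lies in the centre of $G\otimes H$ by the crossed module property. Since $G$ and $H$ are Noetherian and act compatibly, $G\otimes H$ is finitely generated by the earlier corollary, and $A$ has finite index in it (the index being bounded by $|D_H(G)|\le |G|$), so $A$ is a finitely generated abelian group, hence Noetherian. Thus $G\otimes H$ is an extension of the finite, hence Noetherian, group $D_H(G)$ by the Noetherian group $A$, and is therefore Noetherian.

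There is no real obstacle here; the only subtlety worth flagging is that the finiteness hypothesis enters through the inclusion $D_H(G)\subseteq G$, so it is essential that $G$ — and not merely one of the two factors — be the finite one. Indeed $D_G(H)\subseteq H$ may well be infinite, so the previous proposition must be applied with the roles of $G$ and $H$ taken in the order given in the statement.
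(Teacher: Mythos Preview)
Your proof is correct and follows exactly the intended route: the paper states this corollary without proof, since it is immediate from the preceding proposition once one notes that a finite group is Noetherian and that $D_H(G)\subseteq G$ is therefore finite. Your unpacking of the mechanism via the central extension $1\to A\to G\otimes H\to D_H(G)\to 1$ is likewise just the proof of that proposition specialised to this case, so there is nothing to add.
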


\begin{corollary}
Let $G$ and $H$ be Noetherian groups acting on each other compatibly. If one of them acts on the other trivially, then $G\otimes H$ is a Noetherian group.
\end{corollary}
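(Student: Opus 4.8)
The plan is to reduce this to the preceding proposition. Suppose first that $H$ acts trivially on $G$. Then for every $g\in G$ and $h\in H$ we have $^hg=g$, so that $g\,^hg^{-1}=1$; consequently the derivative subgroup $D_H(G)=\langle g\,^hg^{-1}\mid g\in G,\ h\in H\rangle$ is trivial, and in particular finite. Since $G$ and $H$ are Noetherian groups acting compatibly on each other, the preceding proposition applies verbatim and yields that $G\otimes H$ is Noetherian.

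If instead $G$ acts trivially on $H$, then by the same argument $D_G(H)$ is trivial, hence finite. One would then invoke the natural isomorphism $G\otimes H\cong H\otimes G$ (given on generators by $g\otimes h\mapsto (h\otimes g)^{-1}$, which respects the defining relations precisely because the mutual actions are compatible); under this isomorphism the roles of $G$ and $H$ are interchanged, so the previous paragraph applied to $H\otimes G$ shows that $H\otimes G$, and hence $G\otimes H$, is Noetherian. Alternatively, as already noted in an earlier remark of this section, the proof of the proposition on $D_H(G)$ finite is symmetric in $G$ and $H$ once one uses the crossed module $\phi_G$ or $\phi_H$ as appropriate, so it suffices that \emph{either} $D_H(G)$ \emph{or} $D_G(H)$ be finite.

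I expect essentially no obstacle here: the only point requiring a word of justification is the elementary observation that a trivial action forces the corresponding derivative subgroup to vanish, after which the conclusion is immediate. The statement is therefore really a corollary of the finiteness-of-$D_H(G)$ proposition, together with the triviality of one of the derivatives.
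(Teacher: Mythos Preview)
Your proof is correct and matches the paper's approach exactly: the paper simply observes that a trivial action of one group on the other makes the corresponding derivative subgroup trivial, hence finite, and then invokes the preceding proposition (implicitly using the symmetry between $\phi_G$ and $\phi_H$, just as you spell out). Your treatment is if anything more explicit than the paper's one-line proof.
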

\begin{proof}
Since one of the group acts on the other trivially, either $D_H(G)$ or $D_G(H)$ is trivial, and hence the result.
\end{proof}

We are interested in the class of groups for which $[G,G]$ is finite. In considering this class, we are naturally led to the following class of groups.

\begin{definition}
A group $G$ is called a BFC-\emph{group} if each conjugacy class is finite and the number of its elements does not exceed some number $d=d(G)$.
\end{definition}

In \cite{Neu54}, B. H. Neumann  has characterized  BFC-groups in the following way: a group $G$ is a BFC-group iff $[G,G]$ is finite.
 With this result in hand and noting that when $G=H$ and the groups are acting on each other by conjugation, then $D_H(G)=[G,G]$,
  we state the following corollary.

\begin{corollary}
Let $G$ be a Noetherian BFC group. Then $G\otimes G$ is Noetherian.
\end{corollary}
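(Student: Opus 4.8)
The plan is to read this off directly from the preceding proposition combined with Neumann's theorem. Since $G$ is a BFC-group, B.~H.~Neumann's characterization \cite{Neu54} recalled just above tells us that $[G,G]$ is finite. I would then take $H=G$, with $G$ acting on itself by conjugation on both sides; these actions are compatible, since for $g,g',g''\in G$ one has ${}^{{}^{g'}g}g'' = {}^{g'gg'^{-1}}g''$ straight from the definition of the conjugation action, which is exactly the identity demanded in Definition~\ref{D:2.1}. As observed above, in this situation $D_H(G)=D_G(G)=[G,G]$, which we have just seen is finite. Since $G$ is moreover Noetherian by hypothesis, all the hypotheses of the preceding proposition are met, and I conclude that $G\otimes G = G\otimes H$ is Noetherian.

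There is no real obstacle here: the corollary is simply the specialization of the preceding proposition to the case $H=G$, once Neumann's theorem is invoked to convert the BFC hypothesis into finiteness of $[G,G]$. The only point needing verification is the routine compatibility check displayed above, which I would dispatch in a single line.
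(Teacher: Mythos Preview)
Your proof is correct and follows exactly the route the paper intends: invoke Neumann's theorem to convert the BFC hypothesis into finiteness of $[G,G]=D_G(G)$, then apply the preceding proposition with $H=G$. The paper states the corollary without proof after precisely this observation, so there is nothing to add.
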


We do not know whether $G\otimes G$ is Noetherian for a Noetherian group $G$. In the next proposition, we show that this problem can be reduced to studying
the Schur multiplier of Noetherian groups.

\begin{proposition}
Let $G$ be a Noetherian group. Then $G\otimes G$ is Noetherian if and only if the Schur multiplier $M(G)$ is finitely generated.
\end{proposition}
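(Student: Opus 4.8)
The plan is to sandwich $G\otimes G$ between groups controlled by $[G,G]$, the Schur multiplier, and Whitehead's quadratic functor $\Gamma$, using the central subgroup $J(G)=\Ker(\kappa\colon G\otimes G\to[G,G])$ and the exterior square $G\wedge G$. Write $\nabla(G)$ for the kernel of the canonical surjection $\pi\colon G\otimes G\to G\wedge G$, so that $\kappa$ factors as $\kappa=\kappa'\circ\pi$ with $\kappa'\colon G\wedge G\to[G,G]$ given by $g\wedge h\mapsto[g,h]$, and recall the classical identification $M(G)\cong\Ker\kappa'$. Since $\kappa(g\otimes g)=[g,g]=1$ we have $\nabla(G)\subseteq J(G)$, hence $J(G)=\pi^{-1}(M(G))$, which yields the central extension
\[
1\to\nabla(G)\to J(G)\to M(G)\to 1 .
\]
Specialising the Brown--Loday sequence \eqref{E:gamma} to $H=G$ identifies $\nabla(G)$ with the image of $\Gamma(G^{\mathrm{ab}})$ in $G\otimes G$. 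These facts, together with the tautological central extension $1\to J(G)\to G\otimes G\xrightarrow{\kappa}[G,G]\to 1$, form the whole skeleton of the argument, the rest being closure of the class of Noetherian groups under subgroups, quotients and extensions.

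For the implication ``$G\otimes G$ Noetherian $\Rightarrow$ $M(G)$ finitely generated'', observe that $G\wedge G$ is a quotient of the Noetherian group $G\otimes G$, hence Noetherian, and $M(G)\cong\Ker\kappa'$ is a subgroup of $G\wedge G$, hence finitely generated. Equivalently, $M(G)$ is a subquotient of $G\otimes G$, and subquotients of Noetherian groups are Noetherian.

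For the converse, assume $M(G)$ is finitely generated. As $G$ is Noetherian it is finitely generated, so $G^{\mathrm{ab}}$ is a finitely generated abelian group; since $\Gamma$ sends finitely generated abelian groups to finitely generated abelian groups, $\Gamma(G^{\mathrm{ab}})$ is finitely generated, and therefore so is its homomorphic image $\nabla(G)$. In the central extension $1\to\nabla(G)\to J(G)\to M(G)\to 1$ both ends are then finitely generated, and $J(G)$ is abelian (it lies in the centre of $G\otimes G$); hence $J(G)$ is a finitely generated abelian group, in particular Noetherian. Since $[G,G]$ is a subgroup of the Noetherian group $G$ it is Noetherian, so $1\to J(G)\to G\otimes G\to[G,G]\to 1$ exhibits $G\otimes G$ as an extension of a Noetherian group by a Noetherian group; as this class is closed under extensions, $G\otimes G$ is Noetherian.

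The only genuinely non-formal inputs are the Brown--Loday exact sequence \eqref{E:gamma} (already available) and the identification $M(G)\cong\Ker(G\wedge G\to[G,G])$, together with the elementary fact that $\Gamma$ preserves finite generation of abelian groups. The one point needing a moment's care is verifying $\nabla(G)\subseteq J(G)$, so that $M(G)\cong J(G)/\nabla(G)$ genuinely holds; but this is immediate from $[g,g]=1$. I expect no real obstacle beyond assembling these pieces in the right order.
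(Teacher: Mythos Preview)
Your argument is correct and essentially the same as the paper's: both use the identification $M(G)\cong\Ker(G\wedge G\to[G,G])$, the Brown--Loday sequence $\Gamma(G^{\mathrm{ab}})\to G\otimes G\to G\wedge G\to 1$, finite generation of $\Gamma(G^{\mathrm{ab}})$, and closure of Noetherian groups under subgroups, quotients and extensions. The only cosmetic difference is organisational: the paper chains two equivalences ($G\otimes G$ Noetherian $\Leftrightarrow$ $G\wedge G$ Noetherian $\Leftrightarrow$ $M(G)$ finitely generated), whereas you route the converse through $J(G)$ via the extension $1\to\nabla(G)\to J(G)\to M(G)\to 1$; the content is identical.
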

\begin{proof}
Since $G$ is Noetherian, the following exact sequence
\[
1\to M(G) \to G\wedge G \to [G, G] \to 1
\]
implies that $G\wedge G$ is Noetherian if and only if $M(G)$ is finitely generated. On the other hand, since the Whitehead
quadratic functor $\Gamma (\Gab)$ is finitely generated abelian group and
\[
\Gamma (\Gab) \to G\otimes G \to G\wedge G \to 1
\]
is exact, we get that $G\wedge G$ is Noetherian if and only if $G\otimes G$ is Noetherian.
\end{proof}

\begin{proposition}
Let $G$ be a finitely presented Noetherian group. Then $G\otimes G$ is Noetherian.
\end{proposition}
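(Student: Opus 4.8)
The plan is to reduce the statement, via the preceding proposition, to the finite generation of the Schur multiplier $M(G)=H_2(G)$, and then to extract finite generation of $M(G)$ from a finite presentation by means of Hopf's formula.

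Since $G$ is assumed Noetherian, the preceding proposition applies and tells us that $G\otimes G$ is Noetherian if and only if $M(G)$ is finitely generated. Thus it suffices to show that $M(G)$ is a finitely generated abelian group, and for this only the finite presentability of $G$ will be used.

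To this end I would fix a finite presentation of $G$, that is, a short exact sequence $1\to R\to F\to G\to 1$ with $F$ free of finite rank and $R$ the normal closure in $F$ of finitely many elements $r_1,\dots,r_k$. By Hopf's formula, $M(G)\cong (R\cap [F,F])/[F,R]$. The key observation is that $R/[F,R]$ is a finitely generated abelian group: the conjugation action of $F$ on $R/[F,R]$ is trivial, so this group is abelian, and since $R$ is generated by the conjugates $\{\,fr_if^{-1}\mid f\in F,\ 1\le i\le k\,\}$, each of which is congruent to $r_i$ modulo $[F,R]$, the images of $r_1,\dots,r_k$ already generate $R/[F,R]$. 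Consequently $M(G)\cong (R\cap[F,F])/[F,R]$ is a subgroup of a finitely generated abelian group, hence itself finitely generated, and the result follows from the preceding proposition.

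I do not expect a serious obstacle here; the only point needing care is that Hopf's formula requires $F$ of finite rank and $R$ normally finitely generated, which is exactly what finite presentability provides. It is worth remarking that the Noetherian hypothesis enters only through the citation of the preceding proposition, since the argument above shows that $M(G)$ is finitely generated for every finitely presented group.
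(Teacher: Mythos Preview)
Your proof is correct and follows the same approach as the paper: reduce to finite generation of $M(G)$ via the preceding proposition, then invoke the fact that the Schur multiplier of a finitely presented group is finitely generated. The paper simply cites this last fact without proof, whereas you supply the standard argument via Hopf's formula; one small inaccuracy is that Hopf's formula itself holds for any free presentation, and it is only your finite-generation step that requires $F$ of finite rank and $R$ normally finitely generated.
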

\begin{proof}
Straightforward from the fact that the Schur multiplier of a finitely presented group is finitely generated.
\end{proof}

 In \cite{Mo07}, Moravec proves that the non-abelian tensor product of polycyclic groups is polycyclic.
 A group is polycyclic iff it is solvable and Noetherian. We have seen that if $G$ is a finite group and $H$ is a Noetherian group,
  then $G\otimes H$ is a Noetherian group. So the following question is very natural.

  \

{\bf QUESTION.} Let $G$ and $H$ be Noetherian groups acting on each other compatibly. Is $G\otimes H$ a Noetherian group?

In general, we do not know the answer to this question. The class of polycyclic by finite groups is a more general class than the class of polycyclic groups.
 A polycyclic by finite group need not be solvable but it enjoys lot of the finiteness properties that a polycyclic group has.
  So it is natural to ask if the non-abelian tensor product of polycyclic by finite groups is polycyclic by finite.
   Using the methods of Theorem~\ref{finite}, we can prove the following theorem, which provides further sufficient conditions for $G\otimes H$
    to be a Noetherian group. Since the method of the proof is similar to that of Theorem~\ref{finite}, we just sketch the proof here.

\begin{theorem}
Let $G$ and $H$ be polycyclic by finite groups. If $G$ and $H$ act on each other compatibly, then $G\otimes H$ is polycyclic by finite. In particular, $G\otimes H$ is a Noetherian group.
\end{theorem}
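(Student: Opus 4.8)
The plan is to mimic, step by step, the proof of Theorem~\ref{finite}, systematically replacing ``finite'' by ``polycyclic by finite'', the class $\mathcal{C}$ by the class of polycyclic by finite groups, and Lemma~\ref{lemma1} by the theorem of \cite{BCM} that the integral homology groups of a polycyclic by finite group are finitely generated; the roles of Lemmas~\ref{lemma2}--\ref{lemma4} are played by the standard facts that the class of polycyclic by finite groups is closed under subgroups, quotients and extensions. The point to watch is that being ``finitely generated'' is not enough to keep the argument going, since a finitely generated metabelian group need not be polycyclic; so at each stage one must check that the kernel that appears is in fact \emph{finitely generated abelian}, and the tool for this is that the kernel of a crossed module is central.

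First I would dispose of the special case in which $G$ and $H$ are normal subgroups of some group $K$ acting on each other by conjugation. Then $GH$, being an extension of $G$ by a quotient of $H$, is polycyclic by finite, as are its quotients $GH/G\cong H/(G\cap H)$ and $GH/H\cong G/(G\cap H)$, so by \cite{BCM} the groups $H_2(GH)$, $H_3(GH/G)$ and $H_3(GH/H)$ are finitely generated abelian. Here $D_H(G)=[G,H]$ and the crossed module $\phi\colon G\otimes H\to D_H(G)$ is the commutator map $g\otimes h\mapsto[g,h]$; since the kernel of a crossed module is central and $G\otimes H\twoheadrightarrow G\wedge H$ carries central subgroups to central subgroups, the group $\mathfrak{K}:=\Ker([\;,\;]\colon G\wedge H\to[G,H])$ is central, hence abelian, in $G\wedge H$. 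The Brown--Loday sequence~\eqref{E:h3} then exhibits $\mathfrak{K}$ as an extension of a subgroup of $H_2(GH)$ by a quotient of $H_3(GH/G)\oplus H_3(GH/H)$, so $\mathfrak{K}$ is finitely generated abelian; as $[G,H]$ is a normal subgroup of the polycyclic by finite group $GH$, the central extension $1\to\mathfrak{K}\to G\wedge H\to[G,H]\to1$ shows that $G\wedge H$ is polycyclic by finite. Since $G\cap H/[G,H]$ is a finitely generated abelian group (a subquotient of $GH$, abelian because $[x,y]\in[G,H]$ for all $x,y\in G\cap H$), $\Gamma(G\cap H/[G,H])$ is finitely generated abelian, so the sequence~\eqref{E:gamma} presents $G\otimes H$ as an extension of $G\wedge H$ by a quotient of this group; hence $G\otimes H$ is polycyclic by finite.

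For the general case I would form the semidirect product $G\rtimes H$, which is polycyclic by finite (it is an extension $1\to G\to G\rtimes H\to H\to1$), set $G\circ H=G\rtimes H/(G,H)$, and use the crossed modules $\mu\colon G\to G\circ H$ and $\nu\colon H\to G\circ H$ exactly as in the proof of Theorem~\ref{finite}. Then $G\circ H$ and its normal subgroups $\mu(G)$ and $\nu(H)$ are polycyclic by finite, while $\Ker\mu\leq G$ and $\Ker\nu\leq H$ are abelian normal subgroups, hence finitely generated abelian. Because $\Ker\nu$ acts trivially on $G$, \cite{Gu88} gives $G\otimes\Ker\nu\cong I(G)\otimes_{\mathbb{Z}(G)}\Ker\nu$; since $G$ is finitely generated, $I(G)$ is a finitely generated $\mathbb{Z}(G)$-module, so a surjection $\mathbb{Z}(G)^{r}\twoheadrightarrow I(G)$ induces a surjection of abelian groups $(\Ker\nu)^{r}\twoheadrightarrow G\otimes\Ker\nu$, whence $G\otimes\Ker\nu$ is finitely generated abelian, and likewise $H\otimes\Ker\mu$. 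By the special case $\mu(G)\otimes\nu(H)$ is polycyclic by finite, so the exact sequence~\eqref{E:ker} presents $G\otimes H$ as an extension of the polycyclic by finite group $\mu(G)\otimes\nu(H)$ by a quotient of the finitely generated abelian group $G\otimes\Ker\nu\oplus H\otimes\Ker\mu$; therefore $G\otimes H$ is polycyclic by finite, and in particular Noetherian. The main obstacle is exactly this bookkeeping: one must be sure that every kernel produced along the way is finitely generated \emph{abelian} rather than merely finitely generated, and it is the centrality of crossed-module kernels together with the homological input of \cite{BCM} that makes this go through.
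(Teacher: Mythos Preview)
Your proof is correct and follows essentially the same route as the paper's own argument: the special case via the Brown--Loday sequences \eqref{E:h3} and \eqref{E:gamma} together with \cite{BCM}, and then the reduction to the special case via $G\circ H$ and the sequence \eqref{E:ker}. The paper's sketch is terser---it simply asserts that the image of $G\otimes\Ker\nu\oplus H\otimes\Ker\mu$ in \eqref{E:ker} is central and hence that $G\otimes H$ is ``an extension of a polycyclic group by a polycyclic by finite group''---whereas you explicitly justify, via the Guin identification $G\otimes\Ker\nu\cong I(G)\otimes_{\mathbb{Z}G}\Ker\nu$ and the finite generation of $I(G)$, that this central kernel is finitely generated abelian; this is precisely the bookkeeping the paper leaves to the reader.
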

\begin{proof}
By \cite{BCM}, it follows that the integral homology groups of polycyclic by finite groups is finitely generated.
 Hence the  exact sequence \eqref{E:h3}  implies that $G\wedge H$ is a polycyclic by finite group because
  it is an extension of a polycyclic group by a polycyclic by finite group. Now the exact sequence \eqref{E:gamma}
   implies that $G\otimes H$ is a polycyclic by finite group as it is an extension of a polycyclic group by a polycyclic by finite group.
   For the general case, first note that the image of $G\otimes \Ker \nu \oplus H\otimes \Ker \mu$ in the exact sequence \eqref{E:ker} is a central subgroup.
    Thus $G\otimes H$ is a polycyclic by finite group as it is an extension of a polycyclic group by a polycyclic by finite group.
\end{proof}

\section{More on the Bogomolov multiplier}\label{S:bogo}

The main object of this section is the group  $B_0(G)=\Ker \{H^2(G,\mathbb{Q}/{\mathbb{Z}})\to \displaystyle\bigoplus\limits_{A} H^2(A, \mathbb{Q}/{\mathbb{Z}})\}$,
 where $A$ runs over all abelian subgroups of a finite group $G$. Bogomolov (\cite{Bo88}) showed that this group coincides with the unramified Brauer group $\Brnr(V/G)$,
  where $V$ is a vector space defined over an algebraically closed field $k$ of characteristic zero and equipped with a faithful, linear and generically free action of $G$.
   Saltman (\cite{Sa84}) used this to produce the first counterexample to a  problem of Noether on rationality of fields of invariants $k(x_1,\dots, x_n)^G$,
    where $G$ acts on the variables $x_i$ by permutation. More recently, Moravec \cite{Mo12} has given an alternate description for the Bogomolov multiplier
     and we briefly describe this now. Suppose that $M_0(G)$ is a subgroup of $G\wedge G$ generated by all elements $x\wedge y$ such that
$[x, y]=1$. Each such element is contained in the center of $G\wedge G$. Therefore $M_0(G)$ is a normal subgroup of
$G\wedge G$. It is shown in \cite{Mo12} that the Bogomolov multiplier $B_0(G)$ of $G$ is isomorphic to the following group,
\[
B_0(G)\cong \Hom \Big ( \Ker \{G\wedge G /M_0(G) \to [G, G]\}, \mathbb{Q}/\mathbb{Z}\Big),
\]
where $G\wedge G /M_0(G) \to [G, G]$ is a map induced by $g\wedge g' \mapsto [g, g']$. Hence we have an isomorphism,
$B_0(G)\cong \Hom \big( M(G) /M_0(G), \mathbb{Q}/\mathbb{Z}\big)$, where $M(G)$ denotes the Schur multiplier of $G$.

Given an extension of groups $1 \to N \to G \to H \to 1 $, we have the following sequence $B_0(H) \to B_0(G) \to B_0(N)$ which need not
be exact in general. In the following proposition we give a sufficient condition providing exactness of this sequence.

\begin{proposition}
Let $1 \to N \to G \to H \to 1 $ be an extension of groups with $N$ being perfect. If $M_0(G) \to M_0(H)$ is an epimorphism, then
we have the exact sequence
\[
1 \to B_0(H) \to B_0(G) \to B_0(N).
\]
\end{proposition}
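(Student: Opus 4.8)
The plan is to exploit Moravec's description $B_0(G)\cong \Hom(M(G)/M_0(G),\mathbb{Q}/\mathbb{Z})$ and translate the claimed exact sequence of Bogomolov multipliers into a statement about the relative Schur-type quotients $M(\cdot)/M_0(\cdot)$. Since $\Hom(-,\mathbb{Q}/\mathbb{Z})$ is a contravariant exact functor on abelian groups (it turns right-exact sequences into left-exact ones, and $\mathbb{Q}/\mathbb{Z}$ is injective), it suffices to produce a \emph{right-exact} sequence
\[
M(N)/M_0(N)\to M(G)/M_0(G)\to M(H)/M_0(H)\to 1
\]
and then apply $\Hom(-,\mathbb{Q}/\mathbb{Z})$ to obtain $1\to B_0(H)\to B_0(G)\to B_0(N)$. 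So the real content is the right-exactness of this sequence under the two hypotheses: $N$ perfect and $M_0(G)\to M_0(H)$ surjective.

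First I would write down the five-term (or Ganea-type) exact sequence in homology attached to $1\to N\to G\to H\to 1$. Because $N$ is perfect, $N_{\mathrm{ab}}=0$, so $H_1(N)_H=0$ and the standard sequence
\[
H_2(G)\to H_2(H)\to N/[G,N]\to H_1(G)\to H_1(H)\to 0
\]
together with the fact that $N$ perfect forces $N\subseteq [G,G]$ and in fact $N=[G,N]$ (perfectness of $N$ gives $N=[N,N]\subseteq[G,N]\subseteq N$), shows that $N/[G,N]=0$ and hence $M(G)=H_2(G)\to H_2(H)=M(H)$ is \emph{surjective}. This already gives the right-hand surjectivity $M(G)/M_0(G)\to M(H)/M_0(H)\to 1$, provided the map is well defined — which it is, since the functoriality of $G\wedge G$ carries $M_0(G)$ into $M_0(H)$ (an element $x\wedge y$ with $[x,y]=1$ maps to $\bar x\wedge\bar y$ with $[\bar x,\bar y]=1$). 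The hypothesis that $M_0(G)\to M_0(H)$ is onto then guarantees that the preimage of $M_0(H)$ in $M(G)$ is exactly $M_0(G)$ plus the kernel of $M(G)\to M(H)$, which is what is needed for exactness in the middle.

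For exactness at $M(G)/M_0(G)$: an element of $M(G)$ mapping to $0$ in $M(H)/M_0(H)$ maps into $M_0(H)$, hence (by the surjectivity hypothesis on the $M_0$'s) differs from an element of $M_0(G)$ by something in $\Ker(M(G)\to M(H))$; so it remains to identify this kernel with the image of $M(N)/M_0(N)$. Here I would invoke the relative five-term sequence / the Stallings–Stammbach exact sequence to get that $\Ker(M(G)\to M(H))$ is the image of $H_2(N)_H=M(N)_H$, and then check that the map $M(N)\to M(G)$ carries $M_0(N)$ into $M_0(G)$ (again by functoriality of $\wedge$) so that it descends to $M(N)/M_0(N)\to M(G)/M_0(G)$ with the correct image. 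Chasing these identifications, and applying $\Hom(-,\mathbb{Q}/\mathbb{Z})$, yields the asserted sequence $1\to B_0(H)\to B_0(G)\to B_0(N)$.

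The main obstacle I anticipate is the bookkeeping at the middle term: one must be careful that "preimage of $M_0(H)$ equals $M_0(G)\cdot\Ker(M(G)\to M(H))$" genuinely uses the surjectivity of $M_0(G)\to M_0(H)$ and not merely its well-definedness, and that the image of $M(N)$ in $M(G)$ is computed correctly via the five-term sequence when $N$ is perfect (so that the $H_1$ terms drop out cleanly). Everything else is formal functoriality of the exterior square together with injectivity of $\mathbb{Q}/\mathbb{Z}$.
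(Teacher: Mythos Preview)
Your overall architecture coincides with the paper's: both reduce the claim to producing a right-exact sequence
\[
M(N)/M_0(N)\longrightarrow M(G)/M_0(G)\longrightarrow M(H)/M_0(H)\longrightarrow 1
\]
and then applying $\Hom(-,\mathbb{Q}/\mathbb{Z})$. The difference lies in how that sequence is obtained. You reach it homologically, via the five-term (and Stallings--Stammbach) exact sequence for $1\to N\to G\to H\to 1$: perfectness of $N$ kills $H_1(N)$, forcing $M(G)\twoheadrightarrow M(H)$ and identifying $\Ker\big(M(G)\to M(H)\big)$ with the image of $M(N)$; the hypothesis $M_0(G)\twoheadrightarrow M_0(H)$ then handles the passage to $M_0$-quotients exactly as you describe. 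The paper instead stays entirely inside the non-abelian exterior square: it starts from the exact sequence $N\wedge G\to G\wedge G\to H\wedge H\to 1$, uses the identity $[x,y]\otimes g={}^{g}(x\otimes y)^{-1}(x\otimes y)$ together with $N=[N,N]$ to replace the image of $N\wedge G$ by that of $N\wedge N$, quotients by the $M_0$'s (using the surjectivity hypothesis), and then runs a snake-lemma argument against the short exact sequence $1\to N\to [G,G]\to [H,H]\to 1$ of commutator subgroups. Your route is more classical-homological and avoids the exterior-square identity; the paper's route is more self-contained within the tensor/wedge formalism developed earlier and makes the role of perfectness visible as a concrete rewriting of generators rather than as a vanishing $H_1$. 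Both are correct and of comparable length.
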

\begin{proof}
We have the following exact sequence:
\[
N\wedge G \to G\wedge G \to H\wedge H \to 1.
\]
Since $N$ is perfect, the following relation $[x, y]\otimes g = \;^{g}(x\otimes y)^{-1} (x\otimes y)$ for all $x, y\in N$ and $g\in G$, implies
that the image of $N\wedge G$ in $G\wedge G$ is the same as that of $N\wedge N$. Therefore, the aforementioned exact sequence together with
the epimorphism $M_0(G)\to M_0(H)$ give the following exact sequence:
\[
N\wedge N/M_0(N) \to G\wedge G/M_0(G) \to H\wedge H/M_0(H) \to 0 .
\]
Now using the following diagram with exact rows
\begin{equation*}
\xymatrix@+20pt{
&\frac{N\wedge N}{M_0(N)}\ \ar@{->}[r]
\ar@{->}[d]
 &\frac{G\wedge G}{M_0(G)}\ar@{->}[r]
\ar@{->}[d]
&\frac{H\wedge H}{M_0(H)}\ar@{->}[r]
\ar@{->}[d]
&1 \\
1\ \ar@{->}[r]
&N=[N, N]\ \ar@{->}[r]
 &[G,G]\ar@{->}[r]
&[H,H]\ar@{->}[r]
&1 ,
}\end{equation*}
we get an exact sequence:
\[
M(N)/M_0(N) \to M(G)/M_0(G) \to M(H)/M_0(H) \to 1.
\]
Applying the exact functor $\Hom (-, \mathbb{Q}/\mathbb{Z})$ to the previous exact sequence gives the desired result.
\end{proof}

\begin{corollary}
Suppose we are given an extension of groups $1\to N\to G\to H\to 1$ where $M(H)=0$ and $N$ is perfect.
Then there is an injective homomorphism $B_0(G)\to B_0(N)$.
\end{corollary}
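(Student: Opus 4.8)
The plan is to deduce this directly from the preceding Proposition by verifying that its two hypotheses hold in the present situation. The group $N$ is perfect by assumption, so the only thing left to check is that the induced map $M_0(G) \to M_0(H)$ is an epimorphism.

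First I would record that $M_0(H)$ is always a subgroup of $M(H)$: by definition $M_0(H)$ is generated by the elements $x \wedge y$ with $[x,y]=1$, and each such element lies in the kernel of the map $H\wedge H \to [H,H]$ given by $x\wedge y \mapsto [x,y]$, which is precisely $M(H)$. Since we are assuming $M(H)=0$, this forces $M_0(H)=0$, and hence the map $M_0(G)\to M_0(H)$ is the zero map onto the trivial group, which is vacuously surjective. Thus the hypotheses of the Proposition are satisfied.

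Applying the Proposition yields the exact sequence $1 \to B_0(H) \to B_0(G) \to B_0(N)$. Finally, using the isomorphism $B_0(H)\cong \Hom\big(M(H)/M_0(H),\mathbb{Q}/\mathbb{Z}\big)$ recalled at the start of the section, we get $B_0(H)\cong \Hom(0,\mathbb{Q}/\mathbb{Z})=0$, so the exact sequence collapses to $1 \to B_0(G) \to B_0(N)$; that is, $B_0(G)\to B_0(N)$ is injective. There is no real obstacle here: the entire content is carried by the Proposition, and the hypothesis $M(H)=0$ is exactly what is needed to make both the ``$M_0$-epimorphism'' condition and the term $B_0(H)$ disappear.
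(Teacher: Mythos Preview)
Your proof is correct and follows exactly the same approach as the paper: the paper's proof simply states that the result is ``straightforward from the previous proposition because $M_0(H)=0$ and $B_0(H)=0$,'' and you have spelled out precisely why those two facts hold under the hypothesis $M(H)=0$.
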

\begin{proof}
Straightforward from the previous proposition because $M_0(H)=0$ and $B_0(H)=0$.
\end{proof}

\begin{corollary}
Suppose that $G=N\rtimes H$ and $N$ is a perfect group. Then there is an exact sequence:
\[
1 \to B_0(H) \to B_0(G) \to B_0(N).
\]
\end{corollary}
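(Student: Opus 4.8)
The plan is to deduce this from the preceding proposition by verifying its hypotheses in the split case. We are given $G = N \rtimes H$ with $N$ perfect, so we already have the extension $1 \to N \to G \to H \to 1$ with $N$ perfect, and it only remains to check that the induced map $M_0(G) \to M_0(H)$ is surjective; the conclusion then follows verbatim from the previous proposition.

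The key step is the surjectivity of $M_0(G) \to M_0(H)$. Here I would use that the extension splits: the section $s \colon H \to G$ realizes $H$ as a subgroup of $G$, and functoriality of the wedge square gives a homomorphism $H \wedge H \to G \wedge G$ which is a section (up to the relevant identifications) of the canonical epimorphism $G \wedge G \to H \wedge H$ coming from the exact sequence $N \wedge G \to G \wedge G \to H \wedge H \to 1$ recorded in the proof of the previous proposition. Recall $M_0(H)$ is generated by the elements $h \wedge h'$ with $[h, h'] = 1$ in $H$. For such a commuting pair, $s(h)$ and $s(h')$ commute in $G$ (since $s$ is a homomorphism), so $s(h) \wedge s(h')$ is a generating element of $M_0(G)$, and it maps to $h \wedge h'$ under $G \wedge G \to H \wedge H$. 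Since these elements generate $M_0(H)$, the map $M_0(G) \to M_0(H)$ is onto.

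With that in hand, the previous proposition applies directly and yields the exact sequence
\[
1 \to B_0(H) \to B_0(G) \to B_0(N),
\]
which is exactly the assertion.

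The only real point requiring care — the "hard part", though it is mild — is making precise that the functorial map $H \wedge H \to G \wedge G$ sends the distinguished generators of $M_0(H)$ to distinguished generators of $M_0(G)$ and is compatible with the projection $G \wedge G \to H \wedge H$; this is where one must invoke that the actions are by conjugation and that $s$ is a group homomorphism, so that commuting elements of $H$ lift to commuting elements of $G$. Everything else is a formal consequence of the preceding proposition.
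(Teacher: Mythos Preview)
Your argument is correct and coincides with the paper's approach: the paper's proof is the single sentence ``Since $H$ has a complement in $G$, it is clear that $M_0(G)\to M_0(H)$ is onto,'' after which the previous proposition is applied. You have simply unpacked why that surjectivity is ``clear'' via the splitting and functoriality of the exterior square, which is exactly the intended reasoning.
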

\begin{proof}
Since $H$ has a complement in $G$, it is clear that $M_0(G)\to M_0(H)$ is onto.
\end{proof}

\begin{corollary}
Let $G$ be an extension of a finite simple group by a cyclic group. Then $B_0(G)=0$.
\end{corollary}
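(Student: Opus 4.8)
The plan is to unwind the hypothesis in the ``$X$ by $Y$'' convention used throughout the paper, so that $G$ sits in an extension
\[
1 \to N \to G \to C \to 1
\]
with $N$ a finite simple group and $C$ cyclic; since $B_0$ is defined only for finite groups, $C$ is in fact finite cyclic. The argument then splits into two cases according to whether $N$ is abelian.

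First I would treat the case where $N$ is nonabelian simple. Then $N$ is perfect, and the Schur multiplier $M(C)$ of the cyclic group $C$ is trivial, so the hypotheses of the preceding corollary are satisfied (an extension with perfect kernel and quotient of trivial Schur multiplier has $B_0(G)$ injecting into $B_0$ of the kernel). This yields an injective homomorphism $B_0(G) \hookrightarrow B_0(N)$, and by Kunyavskii's theorem \cite{Ku10} the Bogomolov multiplier of every finite simple group vanishes, so $B_0(N) = 0$ and hence $B_0(G) = 0$.

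It remains to handle the degenerate case $N \cong \mathbb{Z}/p\mathbb{Z}$ for a prime $p$, where $N$ fails to be perfect and the preceding corollary does not directly apply. Here one simply observes that $G$ is a metacyclic group, and metacyclic groups are well known to have trivial Bogomolov multiplier; one may also see this from the isomorphism $B_0(G) \cong \Hom(M(G)/M_0(G), \mathbb{Q}/\mathbb{Z})$ recalled above, since the Schur multiplier of a metacyclic group is cyclic with a generator represented by a commuting pair, hence lying in $M_0(G)$. Thus $B_0(G) = 0$ in this case as well.

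I do not expect a genuine obstacle in this proof: the substantive content is already carried by the preceding corollary and by the external triviality results for the Bogomolov multipliers of simple and of metacyclic groups. The only point requiring care is to isolate the case of a cyclic simple normal subgroup, for which the perfectness hypothesis of the preceding corollary fails and the metacyclic argument must be invoked instead.
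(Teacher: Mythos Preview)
Your proof is correct and follows essentially the same route as the paper: split on whether the simple normal subgroup $N$ is abelian, apply the preceding corollary together with the vanishing of $B_0$ for finite simple groups when $N$ is nonabelian, and dispose of the abelian case separately. The only cosmetic difference is that the paper cites the result of Kang--Kunyavski\u\i{} that abelian-by-cyclic groups have trivial Bogomolov multiplier, whereas you observe more specifically that $G$ is metacyclic; both handle the $N\cong\mathbb{Z}/p\mathbb{Z}$ case equally well.
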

\begin{proof}
We have an extension of groups
\[
1\to N\to G \to G/N \to 1,
\]
where $N$ is simple group and $G/N$ is cyclic. If $N$ is abelian, then $G$ is an abelian by cyclic group, and hence by  \cite[Theorem 1.2]{KaKu}, its Bogomolov
multiplier is trivial. If $N$ is not abelian, then by the previous corollary there exists an inclusion
$B_0(G)\to B_0(N)$. But on the other hand, $B_0(N)=0$ (see \cite{BMP, Ku10}).
\end{proof}

\begin{remark}
Notice that using the above result, we easily obtain that $B_0(S_n)=0$ for $n\geq 5$, where $S_n$ denotes the symmetric group on $n$ letters.
 This was obtained in \cite{OnWa}, and also mentioned in \cite{KaKu}.
\end{remark}

\begin{lemma}
Let $G$ be a metacyclic group. Then there exists an element $s\in G$ such that
\[
M(G)=\{x\wedge s \mid x\in N,\ [x, s]=1\}.
\]
\end{lemma}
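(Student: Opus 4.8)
The plan is to realise $M(G)$ as a subgroup of the non-abelian exterior square $G\wedge G$ via the exact sequence $1\to M(G)\to G\wedge G\xrightarrow{\ \kappa\ }[G,G]\to 1$, and to show that $G\wedge G$ is in fact cyclic, generated by $a\wedge s$, where $N=\langle a\rangle$ is the cyclic normal subgroup witnessing that $G$ is metacyclic and $s\in G$ is a fixed preimage of a generator of the cyclic quotient $G/N$; this $s$ will be the element claimed in the statement.

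The first reduction rests on two elementary observations: since $N$ is cyclic, hence abelian, $N\wedge N$ is trivial, and likewise $s\wedge s=1$. As $G=\langle a,s\rangle$, the generation principle used in Section~\ref{S:tensor} (going back to \cite{BJR87}) then shows that $G\wedge G$ is generated by the conjugates ${}^{z}(a\wedge s)$, $z\in G$. The engine of the argument is the relation ${}^{(a\wedge s)}(a\wedge s)={}^{[a,s]}(a\wedge s)$ from \cite{BJR87}: its left-hand side is just $a\wedge s$, so conjugation by $[a,s]=a^{1-r}$ (writing $sas^{-1}=a^{r}$) fixes $a\wedge s$, and since $[a,s]\in N$ and $N$ is abelian, every power of $[a,s]$ fixes $a\wedge s$ as well. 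Feeding this into the defining relations of the tensor product and using $N\wedge N=1$, I would prove by induction that $a^{k}\wedge s=(a\wedge s)^{k}$ for all $k\in\mathbb{Z}$, and then that ${}^{a^{i}s^{j}}(a\wedge s)=(a\wedge s)^{r^{j}}$; hence $G\wedge G=\langle a\wedge s\rangle$ is cyclic.

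Everything after this is bookkeeping with cyclic groups. One has $[G,G]=\langle[a,s]\rangle$, and $\kappa$ sends $a\wedge s$ to $[a,s]$, so $M(G)=\ker\kappa=\langle(a\wedge s)^{d}\rangle$ where $d$ is the order of $[a,s]$ in $G$ (the infinite-order case, forcing $M(G)=0$, is handled identically). By the formula above, $(a\wedge s)^{d}=a^{d}\wedge s$, and $[a^{d},s]=[a,s]^{d}=1$ since $[a,s]$ has order $d$, so $a^{d}\wedge s$ belongs to $\{x\wedge s\mid x\in N,\ [x,s]=1\}$; this already gives the inclusion $M(G)\subseteq\{x\wedge s\mid x\in N,\ [x,s]=1\}$. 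Conversely, $x=a^{e}\in N$ satisfies $[x,s]=1$ precisely when $d\mid e$ (since $[a^{e},s]=a^{e(1-r)}$ and $[a,s]=a^{1-r}$ has order $d$), so $x\wedge s=(a\wedge s)^{e}=((a\wedge s)^{d})^{e}\in M(G)$; combined with the observation that $\{x\wedge s\mid x\in N,\ [x,s]=1\}$ is a subgroup — the image of the homomorphism $C_{N}(s)\to G\wedge G$, $x\mapsto x\wedge s$, well defined exactly because $x$ commutes with $s$ — this yields the reverse inclusion and hence the asserted equality.

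The step I expect to be the main obstacle is establishing the cyclicity of $G\wedge G$ together with the explicit formulas $a^{k}\wedge s=(a\wedge s)^{k}$ and ${}^{a^{i}s^{j}}(a\wedge s)=(a\wedge s)^{r^{j}}$: this requires careful, repeated use of the two defining relations of the non-abelian tensor product, the compatibility of the actions, and the vanishing of $N\wedge N$, all keyed off the single fixed-point fact ${}^{[a,s]}(a\wedge s)=a\wedge s$. Once these identities are in hand, the identification of $M(G)$ is routine.
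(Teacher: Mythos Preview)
Your argument is correct and rests on the same engine as the paper's: using $N\wedge N=1$ together with the multiplicativity identity $(xx')\wedge s=(x'\wedge s)(x\wedge s)$ for $x,x'\in N$ to reduce every element of $G\wedge G$ to the form $x\wedge s$ with $x\in N$. The paper, however, stops right there: once every element of $G\wedge G$ is some $x\wedge s$, the kernel of $\kappa$ is immediately $\{x\wedge s\mid x\in N,\ [x,s]=1\}$, with no need to prove cyclicity, compute conjugation formulas ${}^{a^{i}s^{j}}(a\wedge s)=(a\wedge s)^{r^{j}}$, or introduce the order $d$ of $[a,s]$. Your extra structure (cyclicity of $G\wedge G$, the explicit generator $(a\wedge s)^{d}$ of $M(G)$) is true and pleasant to know, but it is scaffolding the paper does not erect; note also the small slip $(a\wedge s)^{e}=((a\wedge s)^{d})^{e}$, which should read $((a\wedge s)^{d})^{e/d}$.
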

\begin{proof}
We have an extension of groups
\[
1\to N \to G \to H \to 1,
\]
where both $N$ and $H$ are cyclic groups. Since $H\wedge H=1$, we have an epimorphism $N\wedge G \twoheadrightarrow G\wedge G$.
Let $h\in H$ be a generator of $H$. Choose an element $s\in G$ which maps to $h$. Set $S=\langle s \rangle$ and note that $G=SN$.
Since $N$ is a cyclic group, $x\wedge x'=1$ for each $x, x'\in N$. Therefore each element in $N\wedge G$ can be
written as a product $(x_1\wedge s)(x_2\wedge s)\cdots (x_k\wedge s)$, where $x_1, x_2, \dots, x_k\in N$.
Observe that
\begin{align*}
(xx')\wedge s & = (x'\wedge \;^xs)(x\wedge s)=(x'\wedge s[s^{-1}, x])(x\wedge s) \\
&{}=(x'\wedge s)(^sx'\wedge \;^s[s^{-1}, x])(x\wedge s)=(x'\wedge s)(x\wedge s),
\end{align*}
for each $x, x'\in N$. Combining the last two observations, we can conclude that each element of $N\wedge G$ looks like
$x\wedge s$ for some $x\in N$. Since $N\wedge G \to G\wedge G$ is an epimorphism, we obtain that each element of $G\wedge G$ is of the form $x\wedge s$ for some $x\in N$, whence the lemma.
\end{proof}

\begin{lemma} Let $G$ be a group with $M(G)=0$ or $M(G)=\mathbb{Z}/2\mathbb{Z}$. If $B_0(G)=0$, then there exist an element
$s\in G$ such that
\[
M(G)=\{x\wedge s \mid x\in G, [x, s]=1\}.
\]
\end{lemma}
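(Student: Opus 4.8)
The plan is to reduce everything to Moravec's identification $B_0(G)\cong\Hom\bigl(M(G)/M_0(G),\mathbb{Q}/\mathbb{Z}\bigr)$ recalled at the beginning of this section, and then to use that $\mathbb{Z}/2\mathbb{Z}$ is cyclic of small order. \textbf{Step 1 (reduction).} First I would observe that the hypothesis $B_0(G)=0$ forces $M_0(G)=M(G)$. Indeed $M(G)/M_0(G)$ is a quotient of $M(G)$, hence is either trivial or $\mathbb{Z}/2\mathbb{Z}$; since $\Hom(\mathbb{Z}/2\mathbb{Z},\mathbb{Q}/\mathbb{Z})\neq 0$, the vanishing of $B_0(G)\cong\Hom(M(G)/M_0(G),\mathbb{Q}/\mathbb{Z})$ rules out the second case, so $M_0(G)=M(G)$. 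Recall that $M_0(G)$ is by definition the subgroup of $G\wedge G$ generated by all $x\wedge y$ with $[x,y]=1$, and that each such element lies in $M(G)=\Ker(G\wedge G\to [G,G])$, since it maps to $[x,y]=1$.

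\textbf{Step 2 (producing $s$).} After Step 1, $M(G)=M_0(G)$ has order at most $2$, so it is generated by a single element of the form $x_0\wedge y_0$ with $[x_0,y_0]=1$: if $M(G)=0$ take $x_0=y_0=1$, while if $M(G)\cong\mathbb{Z}/2\mathbb{Z}$ then $M_0(G)$ is nontrivial, so among the generators $x\wedge y$ (with $[x,y]=1$) of $M_0(G)$ at least one is the nontrivial element $z$, and we fix such a pair $(x_0,y_0)$. Now set $s=y_0$. On one hand, every element $x\wedge s$ with $[x,s]=1$ lies in $M_0(G)=M(G)$ by the very definition of $M_0(G)$, so $\{x\wedge s\mid x\in G,\ [x,s]=1\}\subseteq M(G)$. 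On the other hand $1=1\wedge s$ and $x_0\wedge y_0=x_0\wedge s$ both belong to that set, and together they exhaust $M(G)$ (which has order $\leq 2$). Hence $M(G)=\{x\wedge s\mid x\in G,\ [x,s]=1\}$, as claimed.

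I do not expect a genuine obstacle here: the argument is essentially an unwinding of the definition of $M_0(G)$ together with the triviality of $\Hom(\mathbb{Z}/2\mathbb{Z},\mathbb{Q}/\mathbb{Z})$ used in Step 1, plus the trivial identities $1\wedge s=1$ and $[1,s]=1$. The only point requiring a little care is that the displayed set $\{x\wedge s\mid x\in G,\ [x,s]=1\}$ is a priori merely a subset of $G\wedge G$ and not obviously a subgroup, so one should conclude equality by sandwiching it between $M_0(G)$ from above (definition of $M_0$) and the finitely many explicit elements $1,\ x_0\wedge y_0$ from below.
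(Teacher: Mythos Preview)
Your proof is correct and follows essentially the same approach as the paper's: both deduce $M(G)=M_0(G)$ from $B_0(G)=0$, pick a nontrivial commuting wedge $x_0\wedge y_0$ when $M(G)\cong\mathbb{Z}/2\mathbb{Z}$, set $s=y_0$, and observe that $\{1\wedge s,\ x_0\wedge s\}$ already exhausts $M(G)$. Your Step~1 is in fact more explicit than the paper, which simply asserts ``Since $M(G)=M_0(G)$'' without spelling out the $\Hom(-,\mathbb{Q}/\mathbb{Z})$ argument.
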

\begin{proof}
If $M(G)=0$, then there is nothing to prove. Suppose that $M(G)=\mathbb{Z}/2\mathbb{Z}$. Since $M(G)=M_0(G)$, there
are elements $s, x\in G$ such that $x\wedge s$ is non-zero in $G\wedge G$ and $[x, s]=1$. Since $M(G)$ contains just two elements, one of them
has to be $x\wedge s$ and the other $1\wedge s$.
\end{proof}

\begin{proposition}
Suppose that $G$ satisfies one of the following conditions:
\begin{itemize}
  \item[(i)] $G$ is metacyclic group;
  \item[(ii)] $M(G)=0$ or $M(G)=\mathbb{Z}/2\mathbb{Z}$ and $B_0(G)=0$.
\end{itemize}
Then for any central extension $1\to C \to X \to G \to 1$, we have $B_0(X)=0$.
\end{proposition}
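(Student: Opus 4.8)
The plan is to invoke Moravec's identification $B_0(X)\cong\Hom\big(M(X)/M_0(X),\mathbb{Q}/\mathbb{Z}\big)$ recalled above, so that $B_0(X)=0$ amounts to the equality $M(X)=M_0(X)$. One inclusion is free: any generator $a\wedge b$ of $M_0(X)$ has $[a,b]=1$, hence maps to $1$ under the commutator map $X\wedge X\to[X,X]$, so $M_0(X)\subseteq M(X)$. For the reverse inclusion the essential input is that, in \emph{both} cases (i) and (ii), the two preceding lemmas produce a \emph{single} element $s\in G$ such that every element of $M(G)$ has the form $x\wedge s$ for some $x\in G$ with $[x,s]=1$ (in case (i) one even gets $x\in N$, which is more than needed). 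It is precisely this ``common second factor'' that drives the argument.

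Next I would compare $M(X)$ with $M(G)$ through the surjection $\pi_*\colon X\wedge X\to G\wedge G$ induced by $X\to G$. The first point is that, since $C$ is central, $\Ker\pi_*$ is generated by the elements $x\wedge c$ and $c\wedge x$ with $x\in X$, $c\in C$: replacing the lift of an element of $G$ by $xc$ alters a basic wedge only by such a term, and the $X$-conjugates of these generators again have the same shape because $C$ is central. As each such generator satisfies $[x,c]=1$, we get $\Ker\pi_*\subseteq M_0(X)$. Now take $w\in M(X)$. Since the commutator maps on $X\wedge X$ and on $G\wedge G$ fit into a commuting square with $\pi_*$, the image $\pi_*(w)$ lies in $M(G)$, so by the lemma $\pi_*(w)=y\wedge s$ for some $y\in G$. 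Pick lifts $\tilde y,\tilde s\in X$ and set $v=\tilde y\wedge\tilde s\in X\wedge X$; then $\pi_*(v)=y\wedge s=\pi_*(w)$, whence $wv^{-1}\in\Ker\pi_*\subseteq M_0(X)$.

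The argument then closes with a small twist: because $M_0(X)\subseteq M(X)=\Ker(X\wedge X\to[X,X])$, the element $wv^{-1}$ is annihilated by the commutator map, and since $w$ is too, so is $v$; that is, $[\tilde y,\tilde s]=1$ in $X$. Hence $v=\tilde y\wedge\tilde s$ is itself one of the defining generators of $M_0(X)$, and $w=(wv^{-1})v\in M_0(X)$. This yields $M(X)\subseteq M_0(X)$, so $M(X)=M_0(X)$ and $B_0(X)=0$. I expect the only delicate point — the ``main obstacle'' — to be the description of $\Ker\pi_*$: one must check carefully that the defining relations of $G\wedge G$ differ from those of $X\wedge X$ only through the ambiguity $x\mapsto xc$ in the choice of lifts, so that $\Ker\pi_*$ is generated by the elements $x\wedge c$ with $c\in C$. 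Centrality of $C$ is used twice here and is indispensable: it guarantees both that these generators form an $X$-invariant set of the required form and that each of them lies in $M_0(X)$; for a merely normal $C$ the error terms $x\wedge c$ need not be commuting wedges and the argument breaks down.
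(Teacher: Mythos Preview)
Your proposal is correct and follows essentially the same route as the paper: pass to the image in $M(G)$, use the two lemmas to write it as a single wedge $x\wedge s$ with $[x,s]=1$, lift, observe that the discrepancy lies in the subgroup generated by the elements $c\wedge x$ (which are in $M_0(X)$ since $C$ is central), and deduce $[\tilde y,\tilde s]=1$. The paper's proof simply asserts the decomposition $\omega=(x'\wedge s')(c_1\wedge x_1)\cdots(c_k\wedge x_k)$ without isolating the kernel description as you do; your treatment of $\Ker\pi_*$ is in fact more careful than the paper's.
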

\begin{proof}
Let $\omega$ be an element in $M(X)$. By the previous
lemma, the image of $\omega$ in $M(G)$ is of the form $x\wedge s$, where $x, s\in G$ and $[x, s]=1$. Suppose that
$x'\in X$ (resp. $s'\in X$) is an element which maps to $x$ (resp. $s$). Then
$\omega = (x'\wedge s')(c_1\wedge x_1)(c_2\wedge x_2)\cdots (c_k\wedge x_k)$, where $c_1, \dots, c_k\in C$ and
$x_1, \dots, x_k \in X$. Since $[c_i, x_i]=1$ for each $i=1, \dots, k$ and $\omega \in M(X)$, we obtain that $[x', s']=1$.
Thus $\omega \in M_0(X)$.
\end{proof}

\begin{corollary}
Let $G$ and $H$ be finite groups acting on each other compatibly. Suppose that $G$ is one of the following groups:
\begin{itemize}
  \item[(i)] metacyclic group;
  \item[(ii)] symmetric group of $n$ elements for $n=5$ or $n\geq 8$;
  \item[(iii)] simple group with $M(G)=0$ or $M(G)=\mathbb{Z}/2\mathbb{Z}$.
\end{itemize}
 Then $B_0(G\otimes H)=0$.
\end{corollary}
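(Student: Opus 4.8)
The plan is to exhibit $G\otimes H$ as a central extension of the derivative $D_H(G)$ and then invoke the preceding Proposition. Recall that the crossed module $\phi\colon G\otimes H\to D_H(G)$, $g\otimes h\mapsto g\,{}^hg^{-1}$, from Section~\ref{S:prel} has central kernel $A=\Ker\phi$, so that
\[
1\to A\to G\otimes H\xrightarrow{\;\phi\;}D_H(G)\to 1
\]
is a central extension; hence $B_0(G\otimes H)=0$ as soon as $D_H(G)$ satisfies condition (i) or (ii) of the preceding Proposition (and if $D_H(G)=1$ then $G\otimes H=A$ is abelian, so $B_0(G\otimes H)=0$ directly, or one notes that the trivial group is metacyclic). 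It will be used repeatedly that $D_H(G)$ is a \emph{normal} subgroup of $G$: regarding $\phi$ as landing in $G$ and using the $G$-action ${}^x(g\otimes h)={}^xg\otimes{}^xh$, the compatibility conditions give ${}^x({}^hg)={}^{({}^xh)}({}^xg)$, whence $x(g\,{}^hg^{-1})x^{-1}=({}^xg)\,{}^{({}^xh)}({}^xg)^{-1}\in D_H(G)$.

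For case (i) this is immediate: subgroups of metacyclic groups are metacyclic (if $N\trianglelefteq G$ with $N$ and $G/N$ cyclic and $U\le G$, then $U\cap N$ and $U/(U\cap N)\hookrightarrow G/N$ are cyclic), so $D_H(G)$ is metacyclic and condition (i) of the preceding Proposition applies. For cases (ii) and (iii) I would use the classification of normal subgroups. If $G=S_n$ with $n=5$ or $n\ge 8$, then $D_H(G)\in\{1,A_n,S_n\}$; here $M(S_n)\cong\mathbb{Z}/2\mathbb{Z}$ for every $n\ge 4$, $M(A_n)\cong\mathbb{Z}/2\mathbb{Z}$ exactly for $n=5$ and $n\ge 8$ (the excluded cases $n=6,7$ being precisely those where $M(A_n)\cong\mathbb{Z}/6\mathbb{Z}$), while $B_0(S_n)=0$ for $n\ge 5$ by the corollary and remark above and $B_0(A_n)=0$ because $A_n$ is a non-abelian finite simple group (\cite{BMP,Ku10}); so in every case $D_H(G)$ meets condition (i) or (ii). If $G$ is simple, then $D_H(G)\in\{1,G\}$, and when $D_H(G)=G$ either $G$ is cyclic of prime order (condition (i)) or $G$ is non-abelian simple with $M(G)$ of order at most $2$ by hypothesis and $B_0(G)=0$ by \cite{BMP,Ku10} (condition (ii)). In all cases the preceding Proposition yields $B_0(G\otimes H)=0$.

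The argument is essentially an assembly of results already established, so I do not expect a genuine obstacle; the two points needing outside input are the normality of $D_H(G)$ in $G$ (so that the classification of normal subgroups of $S_n$ and of a simple group can be applied) and the precise values of the Schur multipliers $M(S_n)$ and $M(A_n)$ together with the vanishing of $B_0$ on finite simple groups. It is worth emphasising that the restriction ``$n=5$ or $n\ge 8$'' in (ii) is sharp: it is exactly the condition guaranteeing $|M(A_n)|\le 2$, which is what allows the preceding Proposition to be applied to the possibility $D_H(G)=A_n$.
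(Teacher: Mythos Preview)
Your proposal is correct and follows essentially the same route as the paper: both reduce to the preceding Proposition via the central extension $1\to A\to G\otimes H\to D_H(G)\to 1$, use that $D_H(G)\trianglelefteq G$, and then check case by case that $D_H(G)$ is metacyclic or has Schur multiplier of order at most $2$ with vanishing Bogomolov multiplier. Your write-up is in fact more thorough than the paper's, which leaves implicit the Schur multiplier computations for $S_n$ and $A_n$ and the reason for excluding $n=6,7$.
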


\begin{proof}
Consider the central extension of finite groups $1\to C\to G\otimes H \to D_H(G)\to 1$.
We know that $D_H(G)$ is a normal subgroup of $G$. Therefore if $G$ is metacyclic group, then $D_H(G)$ is also metacyclic.
If $G=S_n$ for $n\geq 5$, then $D_H(G)$ might be either trivial, or  the alternating group $A_n$, or $S_n$. If $G$ is simple, then $D_H(G)$ might
be either trivial or $G$. Hence in any case, $D_H(G)$ satisfies the requirements of the previous proposition.
\end{proof}

\begin{remark}
There are many finite simple groups $G$ with $M(G)=\mathbb{Z}/2\mathbb{Z}$ or $M(G)=0$ (see \cite{GLS}).
 Except the Chevalley and Steinberg groups and a few other exceptions, most of the finite simple groups have Schur multiplier of order at most 2.
\end{remark}

\section*{Acknowledgement}
This work was partially supported by Ministerio de Econom\'ia y Competitividad (Spain),
grant MTM2013-43687-P (European FEDER support included), and by Xunta de Galicia, grant GRC2013-045 (European FEDER support included).
The first and the third author would like to thank the Department of Algebra at the University of Santiago de Compostela for the hospitality provided during the preparation of this article.


\end{document}